\documentclass[11pt,a4paper]{amsart}
\usepackage{tikz}
\usepackage{amssymb}
\usepackage{amsthm}
\usepackage{url}

\newtheorem{theorem}{Theorem}[section]
\newtheorem{lemma}[theorem]{Lemma}
\newtheorem{corollary}[theorem]{Corollary}
\newtheorem{proposition}[theorem]{Proposition}
\newtheorem{conjecture}[theorem]{Conjecture}

\newtheorem{observation}[theorem]{Observation}

\DeclareMathOperator{\KG}{\mathrm{KG}}
\DeclareMathOperator{\SG}{\mathrm{SG}}

\newcommand{\FF}{\mathcal F}
\newcommand{\indep}[2]{\mathcal I_{#2}(#1)}

\tikzstyle{vertex}=[circle, fill=black, minimum size=6pt, inner sep=0]

\title{The chromatic number of $G$-stable Kneser graphs}

\author{Carmen Arana}
\thanks{The first author was partially supported by the Barrande Fellowship Program}
\address{Université de Paris, CNRS, IRIF, 75006 Paris, France.}
\email{arana@irif.fr}

\author{Penny Haxell}
\thanks{The second author was partially supported by NSERC}
\address{Department of Combinatorics and Optimization, University of Waterloo, Waterloo ON Canada N2L 3G1.}
\email{pehaxell@uwaterloo.ca}

\author{Tom\'a\v s Kaiser}
\address{Department of Mathematics and European Centre of Excellence
  NTIS (New Technologies for the Information Society), University of
  West Bohemia, Pilsen, Czech Republic.}
\email{kaisert@kma.zcu.cz}

\author{Mat\v ej Stehl\'ik}
\thanks{The fourth author was partially supported by an Emergence en recherche IDEX grant from Université Paris Cité.}
\address{Université de Paris, CNRS, IRIF, 75006 Paris, France.}
\email{matej@irif.fr}

\begin{document}
\begin{abstract}
  Extending the notions of Kneser graphs and Schrijver graphs, we define the \emph{$G$-stable Kneser graph} $\KG(G,k)$ (where $k\geq 2$ and $G$ is a graph on $n\geq 2k$ vertices) as the graph whose vertices are the $k$-element independent sets in $G$ and whose edges join pairs of disjoint independent sets. The well-known result of Lov\'{a}sz on the chromatic number of Kneser graphs implies that $\chi(\KG(G,k)) \leq n-2k+2$ for any $G$. We call $G$ $k$-tight if equality holds, and we study the following question: Which graphs are $k$-tight? A result of Daneshpajouh et al.\ implies a characterization of $2$-tight graphs. We prove a necessary condition for a graph to be $k$-tight for $k\geq 3$ which extends this characterization, and we conjecture it to be sufficient. Another result gives a lower bound to the chromatic number of the $G$-stable Kneser graphs when $G$ is a disjoint union of cycles. We use this result to give a short proof of the fact that $4$-regular graphs on $3k$ vertices consisting of two vertex-disjoint cycles and $k$ cycles of length $3$ admit an independent set of size $k$.
\end{abstract}

\maketitle

\section{Introduction}

Recall that the Kneser graph $\KG(n,k)$, where $n\geq 2k\geq 2$, is the graph whose vertices are the $k$-subsets of $[n]=\{1,\dots,n\}$ and whose edges join disjoint $k$-subsets. In 1955 Kneser~\cite{Kne55} observed that $\KG(n,k)$ can be colored with $n-2k+2$ colors, and conjectured that this is optimal. This became known as \emph{Kneser's conjecture}, and was proved by Lov\'asz~\cite{Lov78} in 1978 using topological methods. In the same year, Schrijver~\cite{Sch78} showed that the subgraph $\SG(n,k)$ induced by the $k$-sets that are independent in the $n$-cycle $C_n$ (with vertices numbered by elements of $[n]$ in cyclic order) still has chromatic number $n-2k+2$. Schrijver called these graphs \emph{stable Kneser graphs}; they are also widely known as \emph{Schrijver graphs}.

To express the above results within a more general framework, we adopt the following notion used, e.g., in~\cite[Section~3.3]{Mat03} or~\cite{DMM21}. The \emph{Kneser graph} $\KG(\FF)$ of a hypergraph $\FF$ has the hyperedges of $\FF$ as vertices, and has edges joining disjoint hyperedges. For a graph $G$ and integer $k\geq 2$, let $\indep G k$ be the hypergraph on the vertex of set of $G$ whose hyperedges are the independent $k$-sets of $G$. We define the \emph{$G$-stable Kneser graph} $\KG(G,k)$ as
\begin{equation*}
  \KG(G,k) = \KG(\indep G k).
\end{equation*}

Then $\SG(n,k)$ is precisely $\KG(C_n, k)$, and $\KG(n,k)$ can likewise be obtained as $\KG(\overline{K_n},k)$, where $\overline{K_n}$ is the edgeless graph on $n$ vertices. As another example, choosing $G$ as the $(s-1)$-st power of $C_n$ (where $s\geq 3$) yields the \emph{$s$-stable Kneser graphs} studied in~\cite{Meu11} and in~\cite{Che15,Jon12}.

It is easy to see that if $G'$ is a spanning subgraph of $G$, then $\KG(G,k)$ is an induced subgraph of $\KG(G',k)$. By taking $G'$ to be $\overline{K_n}$, it follows that for any $n$-vertex graph $G$ and for $k\leq\frac n2$, the chromatic number of $\KG(G,k)$ is at most $n-2k+2$. The theorem of Schrijver says that $G=C_n$ achieves this maximum possible chromatic number. By the above, all spanning subgraphs of $C_n$ achieve it as well. Are there other $n$-vertex graphs with this property for some $k$?

This is the main problem we study in this paper. For brevity, we call an $n$-vertex graph $G$ \emph{$k$-tight} if $\chi(\KG(G,k)) = n-2k+2$. Thus, $C_n$ and all its spanning subgraphs are $k$-tight for any $k\leq \frac n2$. Furthermore, $k$-tight graphs on $2k$ vertices are easy to characterize: they are the graphs whose vertex set can be partitioned into two independent $k$-sets, i.e., all subgraphs of the complete bipartite graph $K_{k,k}$. In what follows, we can therefore focus on $k$-tight graphs on at least $2k+1$ vertices.

Our research is inspired by \cite{DMM21} whose subject matter is the chromatic number of $\KG(\FF)$ as defined above, in the case where the hypergraph $\FF$ is actually a graph $G$ (i.e., its hyperedges have size $2$). In our notation, $\KG(\FF)$ is then just $\KG(\overline G,2)$, where the bar stands for complement. By the above theorem of Lov\'{a}sz, the chromatic number of this graph is $n-2$, and the main result of \cite{DMM21} is a characterization of the graphs $G$ for which this value is achieved. It translates to the following characterization of $2$-tight graphs:
\begin{theorem}\label{t:dmm}
  A graph on at least $5$ vertices is $2$-tight if an only if it is a disjoint union of paths and of cycles of length at least $5$. 
\end{theorem}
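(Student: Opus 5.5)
Theorem~\ref{t:dmm} is essentially a translation of the main result of~\cite{DMM21}, but here is how I would prove it directly. I would split it into sufficiency and necessity. Throughout, $G$ has $n\ge 5$ vertices, and $\KG(G,2)$ is the complement of $G$'s complement restricted to non-edges: its vertices are the non-edges of $G$, two adjacent when disjoint. We must decide when $\chi(\KG(G,2))=n-2$.

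\emph{Sufficiency.} Suppose $G$ is a disjoint union of paths and of cycles of length at least $5$. The plan is to add edges to $G$, keeping $n$ fixed, until we reach the single cycle $C_n$. We join the components one after another, end to end. Each path component can be inserted into a growing cycle by connecting its two ends to consecutive positions, after first deleting one edge of the cycle being built; paths also include isolated vertices. Each cycle component $C_m$ with $m\ge5$ is opened up by deleting an edge, turning it into a path.

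Deleting edges and the choice of $C_n$ interact as follows. $\KG(H,2)$ is an induced subgraph of $\KG(H',2)$ whenever $H'$ is a spanning subgraph of $H$. So the route is to find a supergraph of $G$ whose $\KG$ still has chromatic number $n-2$. If $G$ is a spanning subgraph of $C_n$, Schrijver's theorem finishes the argument.

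The difficulty is the cycle components: $C_m$ is not a subgraph of $C_n$ for $m<n$. Here I would instead use a topological bound directly. The target is to show that the neighborhood complex, or the box complex, of $\KG(G,2)$ has connectivity at least $n-4$. The most concrete version uses Dol'nikov's colorability defect. Alternatively, one can apply Schrijver-type Gale/antipodal arguments: place the $n$ vertices on the moment curve so that every open hemisphere contains a non-edge, that is, two vertices at distance at least $2$ within a component. Cycles of length at least $5$ are exactly what guarantees that antipodal configurations still contain non-edges inside a "half" of a cycle. I expect this to be the main obstacle; a simpler alternative is induction on the number of cycle components via a Ky~Fan lemma argument.

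\emph{Necessity.} If $G$ is not of this form, then either a vertex has degree at least $3$, or $G$ contains a $C_3$ or a $C_4$ component. In each case I would exhibit an explicit $(n-3)$-coloring of $\KG(G,2)$. Take a vertex $v$ of degree at least $3$ with neighbours $a,b,c$. Color non-edges by their smallest element in an ordering that puts $v,a,b,c$ last, with $v,a,b,c$ placed so that the pairs not involving an earlier vertex all lie among $\{v,a,b,c\}$. The non-edges among these four vertices are only pairs within $\{a,b,c\}$, which pairwise intersect, so they share one color. This gives $(n-4)+1=n-3$ colors.

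For a $C_3$ component, no non-edge lies inside the triangle, so the same last-three trick saves a color. For a $C_4$ component $abcd$, the non-edges inside are $ac$ and $bd$, which are disjoint and so need two colors. Put these four vertices last: the pairs within them are $\{ac\}$ and $\{bd\}$, which need two colors, where the standard scheme would use three classes. This again yields $n-3$ colors.

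The remaining step is to check that each smallest-element class is pairwise intersecting; this holds because every non-edge in the class contains the smallest element.
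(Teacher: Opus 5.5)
\textbf{Necessity.} Your degree-$3$ and triangle cases are correct. They are the $k=2$ instances of what the paper does for Theorem~\ref{thm:main}: Lemmas~\ref{lem:claw} and~\ref{lem:odd_cycle}, combined with Lemma~\ref{lem:subgraph}.

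The $C_4$ case, however, is miscounted. The stars at the $n-4$ vertices outside $abcd$, plus the two classes $\{ac\}$ and $\{bd\}$, give $n-2$ colors, not $n-3$. The standard scheme uses two classes on the last four vertices, not three: the star at the first of them, and the intersecting family of pairs among the last three. So nothing is saved.

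The repair is to absorb a fifth vertex $x$ outside the $4$-cycle, which exists because $n\ge 5$. Then $\{ac,xa,xc\}$ and $\{bd,xb,xd\}$ are intersecting families. Together with the stars at the remaining $n-5$ vertices, this gives $n-3$ colors. This is exactly Lemma~\ref{lem:even_cycle} with $k=2$, $j=0$.

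\textbf{Sufficiency.} This is the serious gap. Your supergraph argument proves $\chi(\KG(G,2))=n-2$ only when $G$ is a spanning subgraph of $C_n$, that is, a linear forest or $C_n$ itself. Opening a cycle component by deleting an edge goes the wrong way: it can only enlarge the Kneser graph, so a lower bound for the opened-up graph says nothing about $G$. For every $G$ with a cycle component of length less than $n$, you only list candidate tools.

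The tool you call most concrete provably fails. $\mathrm{cd}_2(\indep{G}{2})$ is $n$ minus the size of the largest vertex set that splits into two cliques of $G$. That size is $4$ for triangle-free $G$, so Dol'nikov yields only $\chi\ge n-4$, already for $C_n$. Also, connectivity $n-4$ of the neighborhood complex would force $\chi\ge n-1$, so the right target there is $n-5$.

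The Gale-type claim is asserted but not supplied. You would need $n$ points on $S^{n-4}$ such that every open hemisphere contains a non-edge of $G$. That claim is precisely the whole difficulty.

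The paper does not prove this direction either; it imports it from the main theorem of~\cite{DMM21}. Its own Theorem~\ref{thm:chi_bound} gives only $n-1-c$ for $c$ cycles when $k=2$, which is too weak once $c\ge 2$. As written, your proposal establishes necessity (after the $C_4$ repair) but not sufficiency.
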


Our main result proves a corresponding necessary condition for every $k \geq 2$:

\begin{theorem}\label{thm:main}
  For $k\geq 2$, if a graph $G$ on at least $2k+1$ vertices contains a vertex of degree at least $3$, or properly contains a cycle of length at most $4k-4$, then $G$ is not $k$-tight.
\end{theorem}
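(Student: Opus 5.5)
Since $k$-tightness is inherited by spanning subgraphs (if $G'\subseteq G$ is spanning and $G$ is $k$-tight, then $\chi(\KG(G',k))\ge\chi(\KG(G,k))=n-2k+2$), it suffices to prove non-tightness for minimal "bad" configurations. Precisely: if $G$ contains a vertex of degree at least $3$, then $G$ has a spanning subgraph consisting of a claw $K_{1,3}$ together with $n-4$ isolated vertices. If $G$ properly contains a cycle $C$ of length $\ell\le 4k-4$, then either some vertex of $C$ has degree $\ge 3$ (reducing to the claw case), or $C$ is a component and $G$ has another edge or vertex. In the latter case the spanning subgraph is $C_\ell$ plus isolated vertices (and possibly one extra edge), with $n\ge \ell+1$. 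The plan is therefore to exhibit an explicit proper coloring of $\KG(H,k)$ with $n-2k+1$ colors for each of these minimal graphs $H$. Any supergraph then inherits the coloring, because $\KG(G,k)$ is an induced subgraph of $\KG(H,k)$.

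For the coloring I would imitate the Kneser coloring. Fix a set $S$ of $2k-1$ vertices (or a suitable structured set) and give each independent $k$-set $A$ with $A\not\subseteq S$ the color $\min(A\setminus S)$ under an ordering of $V\setminus S$ of size $n-2k+1$. The $k$-sets contained in $S$ pairwise intersect for free, so they can share one of those colors. Counted naively this gives $n-2k+2$ colors, so one color must be saved. The saving comes from choosing $S$ of size $2k$ with $H[S]$ containing enough edges that independent $k$-subsets of $S$ pairwise intersect. Equivalently, $\KG(H[S],k)$ must have no edge, i.e.\ $H[S]$ has no bipartition into two independent $k$-sets. All independent $k$-sets inside $S$ then get one color, giving $1+(n-2k)=n-2k+1$ colors.

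\emph{Claw case.} Take $S$ to be the claw plus $2k-4$ isolated vertices. Is $H[S]$ split into two independent $k$-sets? The centre must lie on one side with none of its three leaves, so the other side contains all three leaves. That is fine if $k\ge3$, so a bipartition may exist and this $S$ fails. Instead I would take $S$ of size $2k+1$ containing the claw, and color sets inside $S$ with two colors. The question becomes $\chi(\KG(H[S],k))\le 2$, which saves one color since the naive count would use three. This needs a small check: in $S$ = claw + $(2k-3)$ isolated vertices, the disjoint pairs of independent $k$-sets must form a bipartite graph. One could instead choose colors via "contains centre $c$" versus not, plus a leaf rule. This verification is the main obstacle.

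\emph{Cycle case.} Take $S\supseteq V(C)$ with $|S|$ chosen so that $H[S]$ is $C_\ell$ plus $m$ isolated vertices, $|S|=\ell+m$. We need $\chi(\KG(H[S],k))\le |S|-2k+1$, again saving one color. Here the condition $\ell\le 4k-4$ should enter: an independent $k$-set meets $C$ in at most $\lfloor \ell/2\rfloor$ vertices, and two disjoint ones use many isolated vertices. Choosing $m$ minimal, so that two disjoint independent $k$-sets barely fit, should make $\KG(H[S],k)$ very sparse and hence easily colored with fewer colors. Concretely, I would use the Schrijver-type coloring on the cycle part and show that one class can be merged using the extra vertex outside $C$. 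I expect getting the exact threshold $4k-4$ to be the delicate step. I would first try $m=2k-\lceil\ell/2\rceil$ and check the structure of disjoint pairs directly.
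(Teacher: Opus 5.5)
Your reduction to minimal configurations and the star-covering count are exactly the paper's Lemma~\ref{lem:subgraph} and Corollary~\ref{cor:monotone}. In the claw case you also set the right target: $\chi(\KG(H[S],k))\le 2$ for $S$ a claw plus $2k-3$ isolated vertices. What you call the main obstacle there is short. Put the $k$-sets containing at least two leaves in one class; any two of them share a leaf. Put all other $k$-sets in a second class. Each of these contains at most one claw vertex (the root excludes all leaves), hence at least $k-1$ of the $2k-3$ isolated vertices, so any two of them meet. Cycles of length $\ell\le 2k$ are equally routine. An odd $C_\ell$ plus $2k-\ell$ isolated vertices has an edgeless Kneser graph. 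An even $C_\ell$ plus $2k-\ell+1$ isolated vertices is $2$-colored according to whether a $k$-set contains an odd-labeled cycle vertex.

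The genuine gap is the range $2k+1\le\ell\le 4k-4$, which carries the real content of the theorem. There you give no construction, and the heuristic you offer is false. Taking $m$ small does not make $\KG(H[S],k)$ sparse: for $\ell\ge 2k+1$ it contains $\SG(\ell,k)$, whose chromatic number is $\ell-2k+2$ regardless of $m$, since two disjoint independent $k$-sets already fit inside $C_\ell$. For $m=1$ the target $\ell-2k+2$ equals $\chi(\SG(\ell,k))$. So you need an \emph{optimal} coloring of the Schrijver graph whose classes can absorb every set $\{\infty\}\cup B$ without a new color.

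The standard coloring cannot do this. That coloring assigns $A$ the color $\min A$ if $\min A\le \ell-2k+1$, and one extra class otherwise. The set $\{\infty,\ell-2k+3,\ell-2k+5,\dots,\ell-1\}$ is disjoint from some member of every class. More fundamentally, Theorem~\ref{thm:cycle+dot} shows that no such coloring exists once $\ell$ is large. Any valid construction must therefore use $\ell\le 4k-4$ in an essential way, and your plan contains no mechanism that does.

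The missing ingredient is the paper's Theorem~\ref{thm:(4k-4)-cycle}. There the $2k-2$ stars at the centers $v\equiv 2,3\pmod 4$ of $C_{4k-4}$ are replaced by intersecting families $\mathcal C_k(v)$, defined through carefully chosen sets $N_k(v)\ni\infty$. Covering is proved by induction on $k$ (Lemma~\ref{lem:independence} and the lemma after it). Theorem~\ref{thm:(4k-5)-cycle} adapts this to $C_{4k-5}$. The homomorphism of Lemma~\ref{lem:homomorphism} (delete the largest element) then reduces every other $\ell$ in the range to one of these two extremal lengths with a smaller $k$.
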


In particular, a $k$-tight graph is a disjoint union of paths and cycles, and every cycle it properly contains has length at least $4k-3$. We conjecture that the converse holds as well, which would extend Theorem~\ref{t:dmm} to all $k \geq 2$.

\begin{conjecture}\label{conj:main}
  For any $k\geq 2$, a graph $G$ on at least $2k+1$ vertices is $k$-tight if and only if either $G$ is a cycle, or every component of $G$ is a path or a cycle of length at least $4k-3$.
\end{conjecture}

For $k=2$, this is Theorem~\ref{t:dmm}. For $k \geq 3$, the conjecture is consistent with our computational tests, but even the simplest case not covered by Schrijver's theorem, a cycle of length at least $4k-3$ together with an isolated vertex, is open; we were only able to prove it asymptotically.

\begin{theorem}\label{thm:cycle+dot}
  For all $k \geq 2$ there exists $n(k)$ such that for all $n \geq n(k)$, the graph $C_{n-1} \sqcup K_1$ is $k$-tight.
\end{theorem}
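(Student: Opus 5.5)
We must show that $\chi(\KG(C_{n-1}\sqcup K_1,k)) \ge n-2k+2$ once $n$ is large. Write $G=C_{n-1}\sqcup K_1$ and call the isolated vertex $v$. The vertices of $\KG(G,k)$ split into two classes:
\begin{itemize}
\item the stable $k$-sets of $C_{n-1}$, which induce $\SG(n-1,k)$;
\item the sets $\{v\}\cup S$ with $S$ a stable $(k-1)$-set of $C_{n-1}$, which induce a copy of $\SG(n-1,k-1)$.
\end{itemize}
By Schrijver's theorem the first class needs $n-2k+1$ colours, one fewer than required. Suppose for contradiction that $c$ is a proper colouring of $\KG(G,k)$ with $n-2k+1$ colours. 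I will derive a contradiction by locating enough structure inside the copy of $\SG(n-1,k-1)$, which needs $n-2k+3$ colours on its own, and combining it with the adjacencies between the two classes.

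\emph{Step 1: restriction to $\SG(n-1,k)$.} The restriction of $c$ to $\SG(n-1,k)$ is an optimal colouring of a Schrijver graph. I would use a known structural fact, or re-derive it topologically: in any colouring of $\SG(m,k)$ with $m-2k+2$ colours there is a colourful, alternating, Ky Fan type structure. The form I want is the zig-zag theorem, which gives a complete bipartite subgraph $K_{\lceil t/2\rceil,\lfloor t/2\rfloor}$ with $t=m-2k+2$ and all colours distinct.

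\emph{Step 2: sets of type $\{v\}\cup S$.} Each such set is adjacent to every stable $k$-set of $C_{n-1}$ disjoint from $S$. Since $n$ is large, removing the $k-1$ vertices of $S$ leaves long paths, and these contain many stable $k$-sets. The disjoint stable $k$-sets therefore induce a graph that contains a Schrijver-type graph on the remaining cycle arcs. That graph has chromatic number at least about $n-(k-1)-2k+1$, which is larger than the constant-size gap. Hence the colour $c(\{v\}\cup S)$ must be a colour that $c$ uses only sparsely near the complement of $S$. Concretely, for each colour $i$ consider the family $\mathcal A_i$ of stable $k$-sets of $C_{n-1}$ coloured $i$. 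The set $\{v\}\cup S$ can receive colour $i$ only if $S$ meets every member of $\mathcal A_i$, that is, $S$ is a transversal of $\mathcal A_i$ of size $k-1$. So colour classes with a small transversal are the only ones available to the $(k-1)$-sets.

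\emph{Step 3: counting and reduction.} Let $I$ be the set of colours $i$ such that $\mathcal A_i$ has a stable transversal of size $k-1$; only these colours appear on the second class. A family of intersecting $k$-sets with a $(k-1)$-transversal is covered by the $k-1$ stars at the points of the transversal. So for each $i\in I$, removing at most $k-1$ points of the cycle removes all of colour class $i$. If $|I|$ is large, then deleting the union of the transversals from the cycle leaves a long subpath, on which the first class is coloured without the colours in $I$. This contradicts Schrijver's theorem for paths, since a path on $m$ vertices still needs $m-2k+2$ colours, provided the lengths are controlled. If $|I|$ is small, the second class $\SG(n-1,k-1)$, which needs $n-2k+3$ colours, cannot be coloured with only the colours in $I$. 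Balancing these two cases requires quantitative control. The cleanest route is to choose, in a combined topological argument, the $\mathbb Z_2$-index of a suitable subcomplex, and to show via a Tucker/Fan lemma on the $(n-1)$-cycle with an extra coordinate for $v$ that colourings with few colours force the structure above.

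\emph{Main obstacle.} Step 3 is the hardest part. Deleting up to $(k-1)|I|$ points costs about $2(k-1)|I|$ in the Schrijver bound, but only $|I|$ colours are gained. So the naive deletion argument fails, and one needs that the transversal sets for different colours substantially overlap or interact with the cyclic order. My first attempt would use the zig-zag structure from Step 1 to show that the colours in $I$ must be realised by stable sets spread all around the cycle, which forces their transversals to be incompatible. The largeness of $n$, that is $n\ge n(k)$, would be used to guarantee long gaps via pigeonhole.
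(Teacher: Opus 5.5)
Your reformulation in Step~2 is correct, and it is exactly the endgame of the paper's proof. An $(n-2k+1)$-colouring $c$ of $\KG(C_{n-1}\sqcup K_1,k)$ exists only if every stable $(k-1)$-set $S$ of $C_{n-1}$ meets every member of some colour class $\mathcal A_i$ of $c$ on $\SG(n-1,k)$. So you must exhibit one $S$ that avoids a member of every class, and the proposal never produces such an $S$.

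\begin{itemize}
\item Step~3 is not an argument. As you note yourself, the deletion count fails. What replaces it, an unspecified $\mathbb Z_2$-index/Tucker argument or the zig-zag $K_{\lceil t/2\rceil,\lfloor t/2\rfloor}$ making transversals ``incompatible'', is never carried out.
\item The zig-zag subgraph by itself does not help. A $(k-1)$-subset of a vertex on one side avoids the colours of the other side, but nothing controls the colours on its own side.
\item The ``$|I|$ small'' branch rests on a false claim. All sets $\{v\}\cup S$ contain $v$, so they are pairwise \emph{non}-adjacent in $\KG(G,k)$. They form the star at $v$, an independent set needing one colour, not a copy of $\SG(n-1,k-1)$ needing $n-2k+3$ colours. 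This is exactly why the problem is delicate: the star is the trivial extra colour, and the question is whether it can be absorbed into the other classes.
\item The neighbourhood bound in Step~2 only shows that at most $k-1$ colours remain available to each $\{v\}\cup S$, not zero.
\end{itemize}

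The missing idea, which the paper uses, is criticality plus counting; this is where $n\ge n(k)$ really enters. Put $m=n-1$.

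In an optimal colouring of $\SG(m,k)$, each colour class $\mathcal A$ contains a vertex $U$ whose neighbours carry all other colours. Otherwise every vertex of $\mathcal A$ could be recoloured, contradicting Schrijver's theorem.

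Call $\mathcal A$ \emph{anchored} if some $U\in\mathcal A$ meets every member of $\mathcal A$ in at least two points. Charging each member to the lexicographically first pair of $U$ it contains shows that an anchored family has at most $\sum_{j=2}^k(j-1)\binom{m-j}{k-2}\le k^2m^{k-2}$ members. On the other hand, $|V(\SG(m,k))|=\frac mk\binom{m-k-1}{k-1}\ge m^k/k^k$ for $m\ge k^2+k$.

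So for $m$ large, one of the $m-2k+2$ classes, say $\mathcal A$, is not anchored. For its critical vertex $U$ there is therefore $W\in\mathcal A$ with $|U\cap W|=1$. Then $S=U\setminus W$ is a stable $(k-1)$-set disjoint from $W$ and from every neighbour of $U$. Hence $S$ is a transversal of no colour class, and $\{v\}\cup S$ cannot be coloured.
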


Interestingly, together with Theorem~\ref{thm:main} this shows that $C_n$ is not the only maximal $k$-tight graph: for $n \geq n(k)$, so is $C_{n-1} \sqcup K_1$, since adding any edge to it creates a vertex of degree~$3$.

Theorem~\ref{thm:main} shows that a disjoint union of at least two cycles, one of which has length at most $4k-4$, is not $k$-tight, but says nothing about how far $\chi(\KG(G,k))$ falls below $n-2k+2$. In Section~\ref{sec:gen_lower_bd} we bound it from below, with no restriction on the cycle lengths.

\begin{theorem}\label{thm:chi_bound}
  Let $c,k,n$ be integers with $c \geq 1$, $k \geq 2$ and $n \geq 2k+c-1$. If $G$ is a disjoint union of $c$ cycles of total length $n$, then $\chi(\KG(G,k)) \geq n-2k+3-c$.
\end{theorem}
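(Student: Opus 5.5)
Let $G$ be the disjoint union of cycles $C^{(1)},\dots,C^{(c)}$ of lengths $n_1,\dots,n_c$ with $\sum n_i=n$. The plan is to reduce to Schrijver's theorem: find a single cycle $C_{m}$ and a graph homomorphism-type embedding showing that $\KG(C_m,k')$ sits inside $\KG(G,k)$ for suitable $m,k'$ with $m-2k'+2 = n-2k+3-c$. The point is that each cycle contributes one ``defect'' beyond the first, which costs one colour.

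\emph{Merging the cycles.} Delete one edge from each of the cycles $C^{(2)},\dots,C^{(c)}$ and also from $C^{(1)}$. This leaves a linear forest of $c$ paths. Concatenating these paths end to end, and closing up, gives a Hamiltonian cycle $H$ on $V(G)$. An independent set of $H$ need not be independent in $G$ only if it uses both ends of a deleted edge, but all $c$ deleted edges are already joining consecutive-path ends, which are non-adjacent in $H$. So the real issue is the reverse direction. We need a cycle $H'$ on fewer vertices whose independent sets map to independent sets of $G$.

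To achieve this, contract: for $i\ge 2$, choose a vertex $v_i$ of $C^{(i)}$ and delete it. Then $C^{(i)}-v_i$ is a path. Concatenate $C^{(1)}$ cut open at an edge with the paths $C^{(i)}-v_i$, giving a cycle $H$ on $m=n-(c-1)$ vertices. Each edge of $G-\{v_2,\dots,v_c\}$ that is not in $H$ is either the cut edge of $C^{(1)}$ (joining two vertices adjacent... not in $H$, problematic) or nothing else. To fix the cut edge of $C^{(1)}$, instead keep $C^{(1)}$ intact and splice. Replace an edge $xy$ of $C^{(1)}$ by the path $x, P_2, \dots, P_c, y$; this adds edges $xy$ of $G$ outside $H$. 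So instead also remove one vertex of $C^{(1)}$ and use $k'=k$. Then $H$ has $n-c$ vertices, $\SG(n-c,k)$ is an induced subgraph of $\KG(G,k)$ (every $H$-independent set is $G$-independent, since every edge of $G$ among the remaining vertices lies in $H$), giving only $n-c-2k+2$.

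\emph{Gaining the extra colour.} The remaining step, which I expect to be the main obstacle, is to recover the one lost colour by using the deleted vertex $v_1$. I would apply a topological bound directly rather than Schrijver as a black box. The idea is to use the Ky Fan / Gale-type argument: put the $n$ vertices on the moment curve in $\mathbb{R}^{d}$ in the cyclic order of $H$, with the deleted vertices $v_1,\dots,v_c$ inserted at the junctions between consecutive paths. Here $d=n-2k+2-c$. A colouring with $d$ colours yields, via Borsuk--Ulam applied to the sphere $S^{d}$ and open hemispheres, a hemisphere containing $k$ points that are independent in $G$. Schrijver's argument shows that a Gale-type open hemisphere meets at least $k$ points forming an alternating (hence $H$-independent) set. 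Adjacent points on the curve that are not $G$-adjacent are exactly those across junctions, which allows one extra slack per cycle. Counting more carefully should give the $+1$ over the naive bound, yielding $\chi\ge n-2k+3-c$. The hypothesis $n\ge 2k+c-1$ is exactly what makes $d\ge 1$, so that the argument is nonvacuous. Verifying this counting at the junctions, so that choosing every other point of an alternating run never picks two $G$-adjacent vertices, is where the work lies.
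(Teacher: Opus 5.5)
There is a genuine gap. What you actually prove is that $\SG(n-c,k)$ sits inside $\KG(G,k)$ as an induced subgraph. That gives only $\chi(\KG(G,k))\ge n-c-2k+2$, which is one short. The missing colour is the whole content of the theorem beyond this easy bound, and your topological paragraph does not supply it.

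Borsuk--Ulam does not ``yield a hemisphere containing $k$ independent points''. A Gale-type argument needs, as input, a placement of $V(G)$ on a sphere of the right dimension such that every open hemisphere contains an independent $k$-set of $G$. For $C_n$, this is exactly what Gale's lemma provides. You never formulate this property for $G$, let alone prove it, and it is not a routine variant of Schrijver's count. On $S^{n-2k+1-c}$ (the sphere needed to rule out $n-2k+2-c$ colours with open sets), the moment curve only guarantees about $k-1+c/2$ alternating points in a hemisphere. Meanwhile each of the $c$ closing edges, which join the first and last vertex of a block, can spoil one of them. So ``one extra slack per cycle'' is not what the accounting gives without a real argument.

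The missing idea is that you need a homomorphism into $\KG(G,k)$, not an induced copy, and that a deleted vertex can serve as a spare to repair the bad sets. Your abandoned first construction was the right one: cut $C^{(1)}$ open at $xy$, delete one vertex from each other cycle, and splice.

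For $c=2$, label the cycles $1,\dots,a$ and $a+1,\dots,m$, and take the cycle $1,2,\dots,m-1$. Here $x=1$, $y=a$, and the deleted vertex is $m$. Send an independent $k$-set $A$ of this cycle to $(A\setminus\{1\})\cup\{m\}$ if $\{1,a\}\subseteq A$, and to $A$ otherwise.
\begin{itemize}
\item The image is independent in $G$, because the $G$-neighbours $a+1$ and $m-1$ of $m$ are cycle-neighbours of $a$ and $1$, so they are not in $A$.
\item Disjoint sets stay disjoint, because $m$ lies in no set of the domain.
\end{itemize}
Thus merging two cycles loses one vertex, not two. Doing this two cycles at a time and inducting on $c$, which is the paper's proof, ends with a single cycle on $n-c+1\ge 2k$ vertices. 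Schrijver's theorem then gives $(n-c+1)-2k+2=n-2k+3-c$, and no new topology is needed.

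Your one-shot splice of all $c-1$ paths into $C^{(1)}$ also works. If $\{x,y\}\subseteq A$, replace $x$ by $v_j$ for the least $j\ge 2$ such that the last vertex of $P_j$ along the spliced cycle is not in $A$. The first vertex of that $P_j$ is then excluded as well: for $j=2$ by $y$, and for $j>2$ by the last vertex of $P_{j-1}$.
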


The bound is sharp: for each $c \leq k$ there is a disjoint union of $c$ cycles attaining it. In Section~\ref{sec:cycles+triangles} we use Theorem~\ref{thm:chi_bound} in place of Schrijver's theorem in an argument of Aharoni et al.~\cite{AABCKLZ17} to obtain a short proof of the extension of a conjecture of Du, Hsu and Hwang~\cite{DHH93} to disjoint unions of two cycles.

\section{Explicit colorings}\label{sec:proof_main_thm}

This section is devoted to proving Theorem~\ref{thm:main}.

First, we give two simple statements that allow us to reduce our problem to subgraphs. For a vertex $v$ of $G$, the \emph{star centered at $v$} is the set of all vertices of $\KG(G,k)$ that contain $v$. Any two of them meet in $v$, so a star is an independent set of $\KG(G,k)$; consequently, if the stars at the vertices of some $S \subseteq V(G)$ cover $\KG(G,k)$, then $\chi(\KG(G,k)) \leq |S|$.

\begin{lemma}\label{lem:subgraph}
  If $H \subseteq G$, then $\chi(\KG(G,k)) \leq \chi(\KG(H,k))+\lvert V(G)\setminus V(H)\rvert$.
\end{lemma}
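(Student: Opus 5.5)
We have to be careful about what "$H \subseteq G$" means here. For the inequality to be meaningful, $H$ should be a subgraph of $G$ that need not be spanning; the key case is $H = G[V(H)]$ or any subgraph on a subset of the vertices. Write $S = V(G)\setminus V(H)$. The plan is to color $\KG(G,k)$ in two stages: first handle the vertices of $\KG(G,k)$ that meet $S$ using stars, then handle the remaining ones using a coloring of $\KG(H,k)$.

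\emph{Stage one.} For each $v \in S$, give color $v$ to every independent $k$-set of $G$ that contains $v$. If such a set contains several vertices of $S$, pick one of them, say the smallest in some fixed order. Each color class is contained in a star, and we observed above that a star is an independent set of $\KG(G,k)$. This uses $|S|$ colors.

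\emph{Stage two.} Every remaining vertex of $\KG(G,k)$ is an independent $k$-set $A$ of $G$ with $A \subseteq V(H)$. Since $H$ is a subgraph of $G$, every edge of $H$ is an edge of $G$. So $A$ is also independent in $H$, and hence $A$ is a vertex of $\KG(H,k)$. Fix a proper coloring of $\KG(H,k)$ with $\chi(\KG(H,k))$ colors, disjoint from the $|S|$ colors used in stage one, and give each such $A$ its color there.

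\emph{Properness.} Suppose two disjoint sets $A, B$ both received their color in stage two. Then they are adjacent in $\KG(H,k)$, since adjacency in both Kneser-type graphs means disjointness. So they receive different colors. Sets colored in stage one are already handled, because each stage-one color class lies inside a star. Stage-one and stage-two colors never clash, since the two palettes are disjoint.

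Counting colors gives $\chi(\KG(G,k)) \le \chi(\KG(H,k)) + |V(G)\setminus V(H)|$. The only point that needs care is checking that independence in $G$ implies independence in $H$ for sets inside $V(H)$, which holds precisely because $E(H)\subseteq E(G)$. In the degenerate case where $\KG(H,k)$ has no vertices, stage two colors nothing and the bound still holds.
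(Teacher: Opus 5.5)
Your proposal is correct and follows essentially the same approach as the paper: you cover the independent $k$-sets meeting $V(G)\setminus V(H)$ by the stars at those vertices, and you color the remaining sets, which are vertices of $\KG(H,k)$, with an optimal coloring of $\KG(H,k)$. You also make explicit the one point the paper leaves implicit, namely that $E(H)\subseteq E(G)$ makes a set that is independent in $G$ and contained in $V(H)$ independent in $H$ as well.
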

  
\begin{proof}
  The vertices of $\KG(G,k)$, viewed as independent $k$-sets of $G$, can be partitioned into two sets: those containing at least one vertex of $V(G) \setminus V(H)$, and those contained entirely in $H$. The first set is covered by the stars at the vertices of $V(G) \setminus V(H)$, so it can be colored with $|V(G)\setminus V(H)|$ colors. Those in the second set are vertices of $\KG(H,k)$, so can be colored using $\chi(\KG(H,k))$ colors.
\end{proof}

\begin{corollary}\label{cor:monotone}
  The family of $k$-tight graphs is closed with respect to subgraphs; that is, $k$-tightness is a monotone property.
\end{corollary}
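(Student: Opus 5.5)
The corollary follows directly from Lemma~\ref{lem:subgraph} together with the general upper bound coming from Lov\'asz's theorem. The natural reading of ``subgraph'' here is an arbitrary subgraph $H \subseteq G$ that still has at least $2k$ vertices, since otherwise $\KG(H,k)$ and the notion of $k$-tightness are not defined. I will treat this as the intended setting.

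So let $G$ be $k$-tight on $n$ vertices, and let $H \subseteq G$ have $m \geq 2k$ vertices. Applying Lemma~\ref{lem:subgraph} gives
\[
  n-2k+2 = \chi(\KG(G,k)) \leq \chi(\KG(H,k)) + (n-m).
\]
Rearranging yields $\chi(\KG(H,k)) \geq m-2k+2$.

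For the reverse inequality, the introduction already observed that every graph $H$ on $m$ vertices satisfies $\chi(\KG(H,k)) \leq m-2k+2$. The reason is that $\KG(H,k)$ is an induced subgraph of $\KG(\overline{K_m},k)=\KG(m,k)$, and Kneser's coloring of $\KG(m,k)$ uses $m-2k+2$ colors. Hence equality holds, and $H$ is $k$-tight.

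Nothing here is really an obstacle. The only point needing care is the implicit size requirement $|V(H)| \geq 2k$, which makes $m-2k+2$ meaningful and the upper bound applicable. Both spanning and non-spanning subgraphs are covered, because Lemma~\ref{lem:subgraph} accounts for deleted vertices through the term $|V(G)\setminus V(H)|$.
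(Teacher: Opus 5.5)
Your proof is correct and is the argument the paper intends. The paper states the corollary without proof right after Lemma~\ref{lem:subgraph}, and combining that lemma with the introduction's upper bound $\chi(\KG(H,k))\le |V(H)|-2k+2$ gives equality for $H$, exactly as you do. Your restriction to subgraphs with at least $2k$ vertices is the right implicit convention, and it matches how the corollary is later applied in Corollary~\ref{cor:short_cycles}.
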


Next, we treat the case in which the maximum degree is larger than $2$. Recall that a \emph{claw} is a copy of $K_{1,3}$; we call its vertex of degree three the \emph{root}, and its three vertices of degree one the \emph{leaves}.

\begin{lemma} \label{lem:claw}
  Let $G$ be the disjoint union of a claw and $2k-3$ isolated vertices, for $k \in \mathbb{N}$. Then $\chi(\KG(G,k)) \leq 2$.
\end{lemma}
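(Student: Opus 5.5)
Let the claw have root $r$ and leaves $a,b,c$, and let $X$ be the set of $2k-3$ isolated vertices, so $n=2k+1$. An independent $k$-set $S$ of $G$ either contains $r$, in which case $S\setminus\{r\}\subseteq X$, or avoids $r$, in which case $S$ is any $k$-subset of $\{a,b,c\}\cup X$. The plan is to exhibit an explicit partition of the vertices of $\KG(G,k)$ into two intersecting families, that is, two independent sets.

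The first color class will be the star at $r$ together with every independent set containing at least two of the leaves $a,b,c$. I check that this family is intersecting. Two sets containing $r$ meet in $r$. Two sets each containing at least two of the three leaves meet by pigeonhole. The remaining case is a set $S\ni r$ and a set $T$ containing at least two leaves. Then $S\setminus\{r\}$ is a $(k-1)$-subset of $X$, and $T\cap X$ has at least $k-3$ elements. Since $|X|=2k-3$, disjointness would force $|T\cap X|\le k-2$, so it is possible in principle. In that case I move $T$ out of the first class. More precisely, I take the first class to be the star at $r$ plus the sets containing all three leaves; such a $T$ has $|T\cap X|=k-3$, and $(k-1)+(k-3)=2k-4<2k-3$, so disjointness from $S$ is still possible. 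This attempt fails, and I need a different split.

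Instead I will use counting in $X$. Any two sets avoiding $r$ that are disjoint use $2k$ elements of the $(2k)$-element set $\{a,b,c\}\cup X$, so they partition it. The independent sets avoiding $r$ therefore induce a perfect matching in $\KG(G,k)$, namely complementary pairs. Each set $S\ni r$ is disjoint exactly from those $T$ with $T\subseteq (X\setminus S)\cup\{a,b,c\}$, a set of $k-2+3=k+1$ elements. The coloring is then as follows. Color $1$ is the star at $r$ together with every set $T$ avoiding $r$ that contains at most one leaf. Color $2$ is every set avoiding $r$ that contains at least two leaves.

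Now I verify both classes. Color $2$ is intersecting by pigeonhole on the three leaves. For color $1$, a set $T$ with at most one leaf satisfies $|T\cap X|\ge k-1$. Two such sets have at least $2k-2>2k-3$ elements in $X$ in total, so they meet. Next take $S\ni r$ and a set $T$ of this type. We have $|S\cap X|=k-1$ and $|T\cap X|\ge k-1$, with total at least $2k-2>2k-3$, so they meet as well. Finally, sets containing $r$ meet in $r$. Both classes are therefore independent, which gives $\chi\le 2$. The only real obstacle was finding the right threshold on the number of leaves, and the tidy count $|X|=2k-3$ is what makes it work.
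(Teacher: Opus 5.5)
Your final coloring is exactly the paper's partition (independent $k$-sets containing at least two leaves versus all other independent $k$-sets, with the color labels swapped), and your verification is the same as the paper's: pigeonhole on the three leaves for one class, and the count $(k-1)+(k-1)>2k-3$ inside the isolated vertices for the other. The abandoned first attempt and the perfect-matching remark are unnecessary and can be dropped from a clean write-up.
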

  
\begin{proof}
  We partition the vertices of $\KG(G,k)$ into the following color classes: 
  \begin{enumerate}
    \item $k$-sets containing two of the leaves of the claw,
    \item all other $k$-sets.
  \end{enumerate}
  
  Let $U,W$ be vertices of $\KG(G,k)$ containing two of the leaves of the claw. By the pigeonhole principle, $U \cap W \neq \varnothing$.
  
  Let $U,W$ be vertices of $\KG(G,k)$ containing at most one of the leaves of the claw. They can only contain one vertex of the claw, as containing the root is incompatible with containing a leaf. Therefore they must have $k-1$ vertices in the $2k-3$ vertices of $G$ not in the claw. Therefore by the pigeonhole principle, $U \cap W \neq \varnothing$.
\end{proof}

\begin{corollary} \label{cor:degree-3}
  If $G$ has $n \geq 2k+1$ vertices, and contains a vertex of degree at least $3$, then $\chi(\KG(G,k)) \leq n-2k+1$.
\end{corollary}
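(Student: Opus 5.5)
The plan is to combine Lemma~\ref{lem:claw} with Lemma~\ref{lem:subgraph}. Let $v$ be a vertex of $G$ of degree at least $3$, and choose three of its neighbours $a,b,c$. Since $n \geq 2k+1$, the set $V(G)\setminus\{v,a,b,c\}$ has at least $2k-3$ vertices, so we can pick $2k-3$ of them and call this set $X$. Let $H$ be the spanning-free subgraph of $G$ with vertex set $\{v,a,b,c\}\cup X$ and edge set $\{va,vb,vc\}$. Then $H$ is a (not necessarily induced) subgraph of $G$, and it is exactly the disjoint union of a claw and $2k-3$ isolated vertices, with $|V(H)| = 2k+1$.

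By Lemma~\ref{lem:claw}, $\chi(\KG(H,k)) \leq 2$. Applying Lemma~\ref{lem:subgraph} to $H \subseteq G$ gives
\begin{equation*}
  \chi(\KG(G,k)) \leq 2 + \lvert V(G)\setminus V(H)\rvert = 2 + n - (2k+1) = n-2k+1,
\end{equation*}
which is the claimed bound.

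Two points need checking. First, Lemma~\ref{lem:subgraph} holds for arbitrary subgraphs, not only induced ones. This is true: an independent $k$-set of $G$ lying inside $V(H)$ is also independent in $H$, because $H$ has fewer edges. So these sets form an induced subgraph of $\KG(H,k)$, and a proper coloring of $\KG(H,k)$ restricts to them, which is all the proof of Lemma~\ref{lem:subgraph} uses. Second, we need enough vertices to form $X$, which is exactly the hypothesis $n \geq 2k+1$. Neither point is a real obstacle; the main work was already done in Lemma~\ref{lem:claw}.
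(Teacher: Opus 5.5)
Your proof is correct and is essentially the paper's argument: you take the claw at a vertex of degree at least $3$ together with $2k-3$ further vertices as isolated vertices, apply Lemma~\ref{lem:claw} to get $\chi(\KG(H,k))\le 2$, and conclude with Lemma~\ref{lem:subgraph}. Fixing the edge set explicitly as $\{va,vb,vc\}$ and noting that Lemma~\ref{lem:subgraph} applies to non-induced subgraphs only make precise what the paper leaves implicit.
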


\begin{proof}
  Let $H$ be a subgraph of $G$ containing a vertex of degree at least 3, three of its neighbors and $2k-3$ other vertices. By Lemma~\ref{lem:claw}, $\chi(\KG(H,k)) \leq 2$. Therefore, by Lemma~\ref{lem:subgraph},
  \[
    \chi(\KG(G,k)) \leq 2+ \lvert V(G) \setminus V(H) \rvert=2+n-(2k+1)=n-2k+1,
  \]
  as required.
\end{proof}

In the remainder of this section we focus on the case where $G$ contains a short cycle. In the first two lemmas the cycle has length at most $2k$. We use the notation $[a,b]$, where $a$ and $b$ are integers, to denote the set $\{a,a+1,\dots,b\}$.

\begin{lemma} \label{lem:odd_cycle}
  Let $G$ be the disjoint union of a $(2k-2j-1)$-cycle and $2j+1$ isolated vertices, for some $j \in [0, k-2]$. Then $\chi(\KG(G,k)) \leq 1$.
\end{lemma}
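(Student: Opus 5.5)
To show $\chi(\KG(G,k)) \leq 1$ it suffices to show that $\KG(G,k)$ has no edges, i.e.\ any two independent $k$-sets $U,W$ of $G$ intersect. (If there are no independent $k$-sets at all, the bound holds trivially.) The plan is a counting argument in the spirit of the pigeonhole steps in Lemma~\ref{lem:claw}. We combine the vertex budget with the constraint that an independent set can use only few vertices of the odd cycle.

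Write $C$ for the $(2k-2j-1)$-cycle and $I$ for the set of $2j+1$ isolated vertices. The total number of vertices is $n=(2k-2j-1)+(2j+1)=2k$. An independent set in an odd cycle of length $2m+1$ has at most $m$ vertices. Here $m=k-j-1$, so every independent $k$-set $U$ satisfies $|U\cap C|\leq k-j-1$, and hence $|U\cap I|\geq j+1$.

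Now suppose $U$ and $W$ are disjoint independent $k$-sets. Then
\[
|U\cap I|+|W\cap I|\geq 2j+2>2j+1=|I|,
\]
so $U$ and $W$ must share a vertex of $I$, a contradiction. Hence $\KG(G,k)$ is edgeless and one color suffices.

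There is no real obstacle here. The only points to check are the independence number of an odd cycle, $\lfloor (2k-2j-1)/2\rfloor = k-j-1$, and that the hypothesis $j\leq k-2$ ensures the cycle has length at least $3$, so it is a genuine cycle.
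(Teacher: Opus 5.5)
Your proof is correct and is essentially the paper's own argument: an independent $k$-set meets the odd cycle in at most $k-j-1$ vertices, so it must contain at least $j+1$ of the $2j+1$ isolated vertices. By pigeonhole any two such sets therefore intersect, and $\KG(G,k)$ is edgeless.
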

  
\begin{proof}
  Any independent $k$-set of $G$ can have at most $k-j-1$ vertices in the cycle and therefore must contain at least $j+1$ of the isolated vertices. By the pigeonhole principle, any two such sets intersect, so $\KG(G,k)$ is edge-free.
\end{proof}

\begin{lemma} \label{lem:even_cycle}
  Let $G$ be the disjoint union of a $(2k-2j)$-cycle and $2j+1$ isolated vertices, for some $j \in [0, k-2]$. Then $\chi(\KG(G,k)) \leq 2$.
\end{lemma}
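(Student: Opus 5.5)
The plan is to give an explicit two-colouring, as in Lemmas~\ref{lem:claw} and~\ref{lem:odd_cycle}: split the independent $k$-sets by how many vertices they take from the cycle, and use pigeonhole on the $2j+1$ isolated vertices.

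Write $C$ for the $(2k-2j)$-cycle, with vertices $1,\dots,2k-2j$ in cyclic order, and $I$ for the set of $2j+1$ isolated vertices. An independent set meets $C$ in at most $k-j$ vertices. Moreover, if it meets $C$ in exactly $k-j$ vertices, that intersection is one of the two colour classes of the bipartite cycle: the odd vertices $O$ or the even vertices $E$. Let $U$ be an independent $k$-set.

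\emph{Case $|U\cap C|\le k-j-1$.} Then $U$ contains at least $j+1$ vertices of $I$. Any two such sets intersect, since $2(j+1)>2j+1=|I|$.

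\emph{Case $|U\cap C|=k-j$.} Then $U\cap C\in\{O,E\}$, and $U$ contains exactly $j$ vertices of $I$. Two such sets with the same cycle part meet on that part, because $k-j\ge 2$. So the only edges of $\KG(G,k)$ among these sets join a set with cycle part $O$ to a set with cycle part $E$, and only when their $I$-parts are disjoint.

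Now define the colouring. Colour $1$ is all sets with cycle part $O$ together with all sets of the first case. Colour $2$ is all sets with cycle part $E$. Colour class $2$ is clearly independent, since any two of its members share $E$. For colour class $1$, two sets of the first case intersect in $I$ as shown, and two sets with cycle part $O$ intersect in $O$.

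The step that needs care is a set $W$ of the first case against a set $U$ with $U\cap C=O$. Here $W$ has at least $j+1$ vertices of $I$ and $U$ has $j$, for a total of at least $2j+1=|I|$. This is not enough on its own, because disjointness is possible when the total equals $|I|$ exactly. So I would refine the split using how $W$ meets $C$.

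If $|W\cap I|\ge j+2$, pigeonhole on $I$ forces $W\cap U\neq\varnothing$. If $|W\cap I|=j+1$ exactly, then $W$ has $k-j-1\ge 1$ cycle vertices. If any of them is odd, $W$ meets $U$ in $O$. Otherwise $W\cap C\subseteq E$, and such a $W$ can instead be placed in colour $2$. There it meets every $E$-set either in $E$ or, if its cycle part is empty, which cannot happen since $k-j-1\ge1$, in $I$.

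It remains to check the sets moved into colour $2$, that is, those with $|W\cap I|=j+1$ and $\varnothing\ne W\cap C\subseteq E$. Against each other they meet by pigeonhole on $I$. Against the $E$-sets they meet because their cycle part is a nonempty subset of $E$.

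So the final partition is: colour $2$ consists of the sets whose cycle part is nonempty and contained in $E$ and that have at least $j$ vertices in $I$; colour $1$ is everything else. Verifying this case analysis cleanly is the main point; the rest is pigeonhole.
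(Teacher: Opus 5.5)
Your proposal is correct and is essentially the paper's proof. The paper colours a set by whether it contains an odd cycle vertex, which is your final partition except for where the sets with empty cycle part go, and it checks the second class in one line: two disjoint sets avoiding $O$ would give $2k\le (k-j)+(2j+1)$. One small slip is that your summary does not match your case analysis, because the condition ``at least $j$ vertices in $I$'' always holds. The summary therefore also puts into colour $2$ the sets with at least $j+2$ vertices of $I$ and nonempty cycle part inside $E$, which your analysis had left in colour $1$. This is harmless, since such sets meet the $E$-sets in $E$ and meet the other colour-$2$ sets by pigeonhole on $I$.
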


\begin{proof}
  The two color classes are as follows:
  \begin{enumerate}
    \item $k$-sets containing an odd-labeled vertex of the cycle.
    \item all other $k$-sets.
  \end{enumerate}
    
  Any independent set of $G$ has at most $k-j$ vertices in the cycle. A set with fewer vertices on the cycle contains at least $j+1$ of the isolated vertices, so by the pigeonhole principle, any two such sets must intersect. Any independent $k$-set that contains $k-j$ vertices in the cycle and an odd-labeled vertex must contain all odd vertices of the cycle, therefore it intersects with all vertices in the color class. 

  For the second class, suppose for a contradiction that $U$ and $W$ are disjoint independent $k$-sets containing no odd-labeled vertex of the cycle. The union $U \cup W$ contains at most $k-j$ vertices of the cycle, and at most $2j+1$ isolated vertices, so $2k = |U|+|W| \leq (k-j)+(2j+1) = k+j+1$, contradicting $j \leq k+2$ and completing the proof.
\end{proof}

The remaining cycle lengths are harder, and are handled by the following two theorems. We will see in Corollary~\ref{cor:short_cycles} that they imply that $G$ is not $k$-tight if it properly contains any cycle of length between $2k+1$ and $4k-4$.

\begin{theorem} \label{thm:(4k-4)-cycle}
  For each $k\geq 2$, $\chi(\KG(C_{4k-4}\sqcup K_1,k)) \leq 2k-2$.
\end{theorem}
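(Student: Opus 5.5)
The plan is to give an explicit coloring built from stars, in the spirit of Lemmas~\ref{lem:claw}--\ref{lem:even_cycle}. Label the cycle $1,2,\dots,4k-4$ in cyclic order and let $x$ be the isolated vertex, so $n=4k-3$. The target $2k-2$ is one less than the trivial bound $n-2k+2=2k-1$. The natural starting point is to use stars at the odd vertices $1,3,\dots,4k-7$, which gives $2k-3$ colour classes. I would then try to put everything left over into a single extra class.

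\textbf{Step 1 (stars).} For $i\in\{1,3,\dots,4k-7\}$, colour an independent $k$-set with colour $i$ if $i$ is the smallest such odd vertex it contains. Each of these classes lies inside a star, so it is independent in $\KG(C_{4k-4}\sqcup K_1,k)$.

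\textbf{Step 2 (the residual family).} The uncoloured sets avoid $1,3,\dots,4k-7$. They are therefore independent $k$-subsets of the $2k$-element set
\[
R=\{2,4,\dots,4k-4\}\cup\{4k-5,x\}.
\]
A pair of disjoint residual sets $U,W$ must partition $R$. Since $4k-5$ is adjacent to both $4k-6$ and $4k-4$, the set containing $4k-5$ (say $U$) misses $4k-6$ and $4k-4$, so $W\supseteq\{4k-6,4k-4\}$. Thus the Kneser graph on the residual family is a perfect matching between \emph{$U$-type} sets (those containing $4k-5$) and their complements. I would enumerate these pairs explicitly: $U$ is $4k-5$ together with $k-1$ elements of $\{x,2,4,\dots,4k-8\}$, and $W=R\setminus U$.

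\textbf{Step 3 (absorbing one side of the matching).} This is the step I expect to be the main obstacle. Colouring the matching naively costs two colours and only recovers the bound $2k-1$. The saving must come from moving one side of each residual pair into an existing star class.

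My first attempt is to change the residual class so that it is no longer just the leftover sets. I would rotate or redistribute the star centres, possibly centring one class at $x$ or at $4k-5$ instead of at $4k-7$. The aim is that the $U$-type sets, or alternatively the $W$-type sets, can be assigned to the class of an odd vertex $i$. For that class to remain independent, each moved set must meet every set that already received colour $i$.

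To verify this, I would use the structure of independent sets on an even cycle. The bipartition classes of $C_{4k-4}$ have size $2k-2$. A $k$-set that lies mostly on the cycle is forced to use many vertices of one parity class. Counting, in the way of the proof of Lemma~\ref{lem:even_cycle}, should show that two such sets cannot be disjoint while avoiding the chosen centre.

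If a single reassignment rule fails, I would check small cases ($k=2,3$) by hand to locate the right choice of centres. I would then generalize by an inductive or periodic pattern along the cycle, checking the intersecting property class by class using only pigeonhole counts of the form used in Lemmas~\ref{lem:odd_cycle} and~\ref{lem:even_cycle}.
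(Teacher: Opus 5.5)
\textbf{There is a genuine gap: Step~3 is the entire content of the theorem, and the proposal supplies a search strategy in its place rather than a colouring.} Your Steps~1 and~2 are correct: the leftover sets are exactly the independent $k$-subsets of the $2k$-set $R$, and their disjoint pairs form a matching. Moreover, the mechanisms you suggest for Step~3 fail as stated.

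\emph{Re-choosing the centres cannot work on its own.} Suppose $2k-3$ of the classes stay inside stars centred at the vertices of some set $S_0$. Then every independent $k$-set of $(C_{4k-4}\sqcup K_1)-S_0$ must go into the last class. This graph has at least $2k$ vertices, and each of its components is a path or an even cycle. Hence it always contains two disjoint independent $k$-sets, so the last class is not intersecting. Some classes must therefore be genuinely non-star families.

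\emph{Absorbing residual sets into your smallest-odd classes already fails for $k=3$.} Take the cycle $1,\dots,8$, centres $1,3,5$, and $R=\{2,4,6,7,8,x\}$.
Class $1$ contains $\{1,3,5\}$, which is disjoint from all of $R$. More generally, each class $i\le 2k-5$ contains $\{i,i+2,\dots,i+2k-2\}$, which avoids $R$, so these classes can absorb nothing.
Class $3$ contains $\{3,5,7\}$ and $\{3,5,8\}$, so a residual set meeting both would contain the adjacent vertices $7,8$.
Class $5$ contains $\{5,x,8\}$, $\{5,2,8\}$, $\{5,2,7\}$ and $\{5,x,7\}$. These are disjoint from $\{2,4,7\}$, $\{x,4,7\}$, $\{x,6,8\}$ and $\{2,6,8\}$ respectively.
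So the residual edges $\{2,4,7\}\{x,6,8\}$ and $\{x,4,7\}\{2,6,8\}$ survive, and you are back to $2k-1=5$ colours. The pigeonhole counting of Lemma~\ref{lem:even_cycle} cannot repair this. That argument works only because a short cycle forces independent $k$-sets to use the isolated vertices, which is false on $C_{4k-4}$.

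\textbf{What is missing is a global design in which every class is a modified star.} The paper labels the cycle $0,\dots,4k-5$ and starts from the $2k-2$ stars at the \emph{centres} $v\equiv 2,3\pmod 4$. The non-centres induce a perfect matching with $k-1$ edges, so these stars already cover every set avoiding the isolated vertex $\infty$. The task is then to absorb the sets containing $\infty$ at no extra cost.

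Each star is replaced by $\mathcal C_k(v)$: the sets that contain $v$ and meet a carefully chosen set $N_k(v)\ni\infty$, together with the sets containing all of $N_k(v)$. Such a family is automatically intersecting, and sets containing $\infty$ and a centre $v$ lie in $\mathcal C_k(v)$ trivially.

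Covering then reduces to two statements, both proved by induction on $k$ using that $N_k(v)$ equals $N_{k-1}(v)$ or $N_{k-1}(v)\cup\{4k-7\}$:
\begin{enumerate}
  \item The cycle plus edges joining each centre $v$ to $N_k(v)\setminus\{\infty\}$ has no independent $k$-set, so every $\infty$-free set lies in some $\mathcal C_k(v)$.
  \item Every independent $(k-1)$-set $B$ of the cycle containing no centre satisfies $B\cup\{\infty\}\supseteq N_k(v)$ for some centre $v$.
\end{enumerate}
Finding sets $N_k(v)$ with both properties is the real difficulty. Nothing in your proposal plays this role.
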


\begin{theorem} \label{thm:(4k-5)-cycle}
  For each $k\geq 2$, $\chi(\KG(C_{4k-5}\sqcup K_1,k)) \leq 2k-3$.
\end{theorem}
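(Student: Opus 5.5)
The plan is to write down an explicit proper coloring with $2k-3$ colors, as in Lemmas~\ref{lem:odd_cycle} and~\ref{lem:even_cycle}. Label the cycle $C_{4k-5}$ by $1,\dots,4k-5$ in cyclic order and call the isolated vertex $x$. The vertices of $\KG(C_{4k-5}\sqcup K_1,k)$ split into two families:
\begin{enumerate}
  \item the sets $\{x\}\cup T$, with $T$ an independent $(k-1)$-set of $C_{4k-5}$;
  \item the independent $k$-sets $S$ of $C_{4k-5}$, which form a copy of $\SG(4k-5,k)$.
\end{enumerate}
The first family is a star, hence independent. By Schrijver's theorem the second family alone already needs $4k-5-2k+2=2k-3$ colors. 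So there is no slack: the whole content of the theorem is that the star at $x$ can be absorbed into a $(2k-3)$-coloring of $\SG(4k-5,k)$ without any new color. Sanity checks: for $k=2$ every vertex contains $x$, so one color suffices. For $k=3$ one must merge the sets $\{x,a,b\}$ into a $3$-coloring of $\SG(7,3)$, which I would check by hand first to guess the pattern.

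Concretely, I would choose a coloring of $\SG(4k-5,k)$ with the following property. Some class $\mathcal C$ (or a union of classes, split according to $T$) is such that no member of $\mathcal C$ is disjoint from any $T$ assigned to it. The useful counting fact is that an independent $k$-set $S$ of the odd cycle $C_{4k-5}$ is close to maximum, since $\alpha(C_{4k-5})=2k-3$ and $k$ is roughly half of that. Likewise $T$, of size $k-1$, is large. So $S$ and $T$ can be disjoint only in very constrained configurations: roughly, $S\cup T$ must cover the cycle so that between consecutive gaps the two sets interleave almost perfectly.

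I would try the following rule. Color $S$ in the second family by the position of its first ``double gap''. Since $|S|=k$ on $4k-5$ vertices, the gaps between consecutive elements of $S$ sum to $3k-5$. Hence some gap has length at least $2$, and the number of long gaps is controlled. Choosing a canonical such gap (say the first, read cyclically from vertex $1$) and recording it modulo the appropriate period gives $2k-3$ classes, in the spirit of the parity classes of Lemma~\ref{lem:even_cycle}.

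For $\{x\}\cup T$, the set $T$ has gap sum $3k-4$ over $k-1$ gaps, so it has even more long gaps. I would assign it to the class of a canonical long gap of $T$ chosen so that any $S$ in that class must place a vertex inside that gap region, where $T$ forces it to meet $T$.

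The verification has two parts. First, two sets $S,S'$ in the same class intersect; this is the standard Schrijver-type argument and should follow from the shared gap structure by pigeonhole on the arc complementary to the gap. Second, $\{x\}\cup T$ meets every $S$ in its class; here only $S\cap T\neq\varnothing$ is at issue.

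The main obstacle is choosing the canonical gap for $T$ so that the second check works. I expect a case split on whether $T$ has one very long gap or several gaps of length $2$. If the direct rule fails, my fallback is to take the standard Kneser-type coloring of $\SG(4k-5,k)$: stars at $2k-4$ suitably spaced vertices, together with a last class consisting of the independent $k$-sets inside the remaining arc. I would then show that each $T$ either contains one of the star centers, so $\{x\}\cup T$ goes in that star, or lies in a position where it meets every set of the last class.
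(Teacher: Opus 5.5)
\textbf{There is a genuine gap.} Your reduction is sound. The sets $\{x\}\cup T$ form a star, and $\SG(4k-5,k)$ alone already needs $2k-3$ colors. So everything rests on exhibiting a $(2k-3)$-coloring of $\SG(4k-5,k)$ that absorbs this star. But the proposal never produces such a coloring, and that construction is the whole content of the theorem.

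The ``first double gap'' rule is not actually specified. No period is given, and it is unclear how $4k-5$ gap positions would collapse to $2k-3$ classes. Nothing in the proposal explains why two sets in one class must meet, and sharing a long gap does not force this. Two independent $k$-sets avoiding the same pair $\{p,p+1\}$ both lie in a path on $4k-7$ vertices, and for $k\ge 4$ that path contains two disjoint independent $k$-sets. You yourself leave the assignment of the sets $T$ unresolved.

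Your fallback provably fails, whatever $2k-4$ centers you choose. Take $k\ge 3$ and let $R$ be the $2k-1$ non-centers. Then $C_{4k-5}[R]$ is a disjoint union of paths with an odd total number of vertices, so it has an odd number of odd components. Two-color each path, and let half of the odd components (rounded up) put their larger side into $S$. This splits $R$ into an independent $k$-set $S$ and an independent $(k-1)$-set $T$. Since $S$ contains no center, it lies in your last class. Since $T$ contains no center and misses $S$, the vertex $\{x\}\cup T$ fits neither option your scheme allows. For consecutive centers, $S$ is just the alternating $k$-set of the arc and $T$ is its complement. So a scheme of $2k-4$ stars plus one leftover class cannot work.

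The missing idea is to trade part of each star for part of the star at the isolated vertex. The paper first treats $C_{4k-4}\sqcup K_1$. For each center $v\equiv 2,3\pmod 4$ it uses the class $\mathcal C_k(v)$ of sets that either contain $v$ and meet a small set $N_k(v)\ni\infty$, or contain all of $N_k(v)$. Coverage is proved by showing that an auxiliary graph $H_k$ has no independent $k$-set (by induction on $k$), together with a lemma on center-free $(k-1)$-sets. The $(4k-5)$ case then follows by contracting the centers $2,3$ into a single vertex $\star$. Their two classes are replaced by the one class of sets with at least two elements of $\{5,\star,\infty\}$.
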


\subsection{Proof of Theorem \ref{thm:(4k-4)-cycle}}
For $k \geq 2$, let $G_\ell=C_{4\ell-4} \sqcup K_1$, label the vertices of the cycle $0, \ldots, 4\ell-5$, and label the isolated vertex $\infty$. A vertex $v$ of the cycle is a \emph{center} if $v \equiv 2$ or $3 \pmod 4$. Note that there are exactly $2\ell-2$ centers. The $2\ell-2$ non-centers in the cycle induce a perfect matching of size $\ell-1$, so every independent $\ell$-set of $C_{4\ell-4}$ must contain at least one center. This shows that the $2k-2$ stars centered at all vertices $v$ such that $v \equiv 2$ or $3 \pmod 4$ cover all vertices of $\SG(4k-4,k)$. Our goal is to modify this cover by replacing certain $k$-sets in these stars by $k$-sets containing $\infty$.

For a center $v$ of $G_\ell$ such that $v \equiv 2 \pmod 4$, set
\[
  N_\ell(v) = \{0,\,v-2,\,\infty\} \cup \left(\{v+3\} \cap V(G_{\ell})\right),
\]
and for a center $v$ of $G_\ell$ such that $v \equiv 3 \pmod 4$, set
\[
  N_\ell(v) = \{1,\,\infty\} \cup \{4j: 0<4j<v\} \cup \left(\{v+2\}\cap V(G_\ell)\right).
\]
Let $H_{\ell}$ be the graph obtained from $G_{\ell}$ by adding edges from each center $v$ to the set $N_{\ell}(v)$; see Figure~\ref{fig:H4} for an illustration. Note that $G_{\ell}$ is a spanning subgraph of $H_{\ell}$.

Let $\mathcal C_{\ell}(v)$ consist of the set of all vertices $U$ of $\KG(G_\ell,\ell)$ such that either $v \in U$ and $U$ intersects $N_{\ell}(v)$, or $N_{\ell}(v) \subseteq U$. Note that $\mathcal C_{\ell}(v)$ forms an intersecting family, and therefore an independent set in $\KG(G_{\ell},\ell)$, for every center $v$ of $G_{\ell}$.

\begin{observation} \label{obs:centers}
  Let $k \geq 3$. A vertex $v \in V(G_{k-1})$ is a center of $G_{k-1}$ if and only if $v$ is a center of $G_k$ and $v \leq 4k-9$. Moreover,
  \[
    N_k(v) =%
    \begin{cases}
      N_{k-1}(v) \cup \{4k-7\} & \text{if } v \in \{4k-10,4k-9\},\\
      N_{k-1}(v) & \text{otherwise.}
    \end{cases}
  \]
\end{observation}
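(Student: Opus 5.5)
This is a direct verification from the definitions, and I will split it into two parts: first the statement about centers, then the formula for $N_k(v)$.

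\emph{Centers.} The cycle of $G_{k-1}$ has vertices $0,\dots,4k-9$, and the cycle of $G_k$ has vertices $0,\dots,4k-5$. In both graphs the center condition is the same congruence, $v\equiv 2$ or $3 \pmod 4$. So a vertex of $G_{k-1}$ is a center of $G_{k-1}$ exactly when it is a center of $G_k$ lying in $[0,4k-9]$. The isolated vertex $\infty$ is not a center in either graph, so it causes no trouble.

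\emph{The sets $N_k(v)$.} Fix a center $v\le 4k-9$. I will compare the defining formulas term by term.

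If $v\equiv 2\pmod 4$, the terms $0$, $v-2$ and $\infty$ do not depend on the level. The only level-dependent term is $\{v+3\}\cap V(G_\ell)$. We have $v+3\in V(G_k)$ always, since $v+3\le 4k-6$. On the other hand, $v+3\in V(G_{k-1})$ if and only if $v+3\le 4k-9$, that is, $v\le 4k-12$. The largest center of $G_{k-1}$ that is $\equiv 2\pmod 4$ is $4k-10$. So the two sets differ only when $v=4k-10$, where $v+3=4k-7$ is added.

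If $v\equiv 3\pmod 4$, the terms $1$, $\infty$ and $\{4j:0<4j<v\}$ are independent of $\ell$. The level-dependent term is $\{v+2\}\cap V(G_\ell)$. Since $v+2\le 4k-7$, it always lies in $V(G_k)$. It lies in $V(G_{k-1})$ if and only if $v+2\le 4k-9$, that is, $v\le 4k-13$. The largest center of $G_{k-1}$ that is $\equiv 3\pmod 4$ is $4k-9$. So the two sets differ only when $v=4k-9$, where again $v+2=4k-7$ is added.

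\emph{Where care is needed.} The only delicate point is the boundary bookkeeping: identifying exactly which centers lie near the top of the shorter cycle. I will also note that $k\ge3$ ensures $G_{k-1}$ is defined with a nonempty cycle, so these boundary cases are meaningful.
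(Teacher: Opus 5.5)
Your verification is correct and is the same direct check from the definitions that the paper leaves to the reader (it states the observation without proof): the center condition is the same congruence at both levels, and the only level-dependent terms $\{v+3\}\cap V(G_\ell)$ and $\{v+2\}\cap V(G_\ell)$ differ exactly at $v=4k-10$ and $v=4k-9$, where each contributes $4k-7$. One cosmetic point: $v+2\le 4k-9$ is literally $v\le 4k-11$, and it becomes $v\le 4k-13$ only after using $v\equiv 3 \pmod 4$, so say so explicitly.
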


The vertices of $\KG(G_k,k)$, seen as independent $k$-sets of $G_k$, can be partitioned into three types:
\begin{enumerate}
  \item independent $k$-sets not containing $\infty$;
  \item independent $k$-sets containing $\infty$ and a vertex $v \equiv 2$ or $3 \pmod 4$;
  \item independent $k$-sets containing $\infty$ and no vertex $v \equiv 2$ or $3 \pmod 4$.
\end{enumerate}

Our goal is to show that all the vertices of $\KG(G_k,k)$ are in some $\mathcal{C}_k(v)$. The vertices of type 2 contain $\{v, \infty\}$ for some $v \equiv 2,3 \pmod 4$, so they are in $\mathcal{C}_k(v)$. It remains to show that vertices of type 1 and 3 are in some $\mathcal{C}_k(v)$, which we will show in the next two lemmas.

The fact that every vertex of type 1 is in some $\mathcal{C}_k(v)$ can be expressed in terms of (the non-existence of) independent sets of size $k$ in $H_{\ell}-\infty$. Indeed, if $U$ is an independent $k$-set in $G_k-\infty$, but not in $H_k-\infty$, then $U$ contains some center $v$, as well as a vertex in $N(v)$. Hence, $U \in \mathcal C_k(v)$.

\begin{figure}
  \definecolor{myred}    {RGB}{196, 62, 62}
  \definecolor{myorange} {RGB}{228,145, 35}
  \definecolor{mygreen}  {RGB}{ 55,140, 75}
  \definecolor{myblue}   {RGB}{ 55,105,205}
  \definecolor{mypurple} {RGB}{130, 80,170}
  \definecolor{myteal}   {RGB}{ 30,150,155}
  \begin{tikzpicture}
    \node[vertex]          (0)  at ( 1.2, 5) {};
    \node[vertex]          (1)  at (-1.2, 5) {};
    \node[vertex,mypurple] (2)  at ( 1.2, 4) {};
    \node[vertex,myteal]   (3)  at (-1.2, 4) {};
    \node[vertex]          (4)  at (-1.2, 3) {};
    \node[vertex]          (5)  at ( 1.2, 3) {};
    \node[vertex,myorange]          (6)  at ( 1.2, 2) {};
    \node[vertex,mygreen]          (7)  at (-1.2, 2) {};
    \node[vertex]          (8)  at (-1.2, 1) {};
    \node[vertex]          (9)  at ( 1.2, 1) {};
    \node[vertex,myblue]          (10) at ( 1.2, 0) {};
    \node[vertex,myred]          (11) at (-1.2, 0) {};

    \path[thick,dotted]
      (0) edge (1)
      (1) edge (2)
      (2) edge (3)
      (3) edge (4)
      (4) edge (5)
      (5) edge (6)
      (6) edge (7)
      (7) edge (8)
      (8) edge (9)
      (9) edge (10)
      (10) edge (11)
      (11) edge (0);

    \path
      (0) edge[ultra thick,mypurple] (2)
      (2) edge[ultra thick,mypurple] (5);

    \path
      (1) edge[ultra thick,myteal] (3)
      (3) edge[ultra thick,myteal] (5);

    \path
      (4) edge[ultra thick,myorange] (6)
      (6) edge[ultra thick,myorange] (9)
      (6) edge[ultra thick,myorange, bend right] (0);

    \path
      (4) edge[ultra thick,mygreen] (7)
      (7) edge[ultra thick,mygreen] (9)
      (7) edge[ultra thick,mygreen,bend left] (1);

    \path
      (8) edge[ultra thick,myblue] (10)
      (10) edge[ultra thick,myblue,bend right,out=-45,in=-135] (0);

    \path
      (8) edge[ultra thick,myred] (11)
      (4) edge[ultra thick,myred,bend right] (11)
      (11) edge[ultra thick,myred,bend left,out=45,in=135] (1);

    \foreach \v/\p in {
      0/above,
      1/above,
      2/right,
      3/left,
      4/left,
      5/right,
      6/right,
      7/left,
      8/left,
      9/right,
      10/below,
      11/below}
      \node[\p=2pt] at (\v) {\scriptsize $\v$};

    \begin{scope}[xshift=6cm]

      \node[vertex] (2)  at ( 1.2, 4) {};
      \node[vertex] (3)  at (-1.2, 4) {};
      \node[vertex] (5)  at ( 1.2, 3) {};
      \node[vertex] (6)  at ( 1.2, 2) {};
      \node[vertex] (7)  at (-1.2, 2) {};
      \node[vertex] (9)  at ( 1.2, 1) {};

      \path
        (2) edge[semithick] (3)
        (5) edge[semithick] (6)
        (6) edge[semithick] (7)
        (2) edge[semithick] (5)
        (3) edge[semithick] (5)
        (6) edge[semithick] (9)
        (7) edge[semithick] (9);

      \foreach \v/\p in {
        2/right,
        3/left,
        5/right,
        6/right,
        7/left,
        9/right}
        \node[\p=2pt] at (\v) {\scriptsize $\v$};

    \end{scope}
  \end{tikzpicture}

  \caption{Left: the graph $H_4-\infty$, with the $12$-cycle dotted; the remaining edges have the color of the incident center. Right: the subgraph $H_5[T]$ used in the proof of Lemma~\ref{lem:independence}.}
  \label{fig:H4}

\end{figure}
    
\begin{lemma}\label{lem:independence}
  The graph $H_k-\infty$ contains no independent $k$-set.
\end{lemma}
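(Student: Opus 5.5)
We argue by induction on $k$, using Observation~\ref{obs:centers}, which says that $H_{k-1}-\infty$ is essentially the subgraph of $H_k-\infty$ induced on $[0,4k-9]$, with only a few extra edges touching $4k-7$.

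\emph{Base case $k=2$.} Here $H_2-\infty$ is the $4$-cycle on $0,1,2,3$ together with the added edges. The centers are $2$ and $3$, and $N_2(2)=\{0,\infty\}$, $N_2(3)=\{1,\infty\}$ (intersected with $V(G_2)$ where needed). So $H_2-\infty$ is $C_4$ plus the chords $\{0,2\}$ and $\{1,3\}$, which is $K_4$. It has no independent $2$-set. (The case $k=3$, with $C_8$, can also be checked directly if the induction step needs a larger base.)

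\emph{Inductive step.} Suppose $U$ is an independent $k$-set of $H_k-\infty$. Split the cycle $[0,4k-5]$ into the old part $A=[0,4k-9]$ and the new block $B=\{4k-8,4k-7,4k-6,4k-5\}$.
\begin{itemize}
\item The vertex $4k-8$ is adjacent to the centers $4k-6$ and $4k-5$ (as $v-2$ and as a multiple of $4$, respectively).
\item $B$ induces a $K_4$ apart from the edge $\{4k-8,4k-5\}$, and that edge is present as well. So $|U\cap B|\le 1$.
\item Hence $|U\cap A|\ge k-1$.
\end{itemize}
By Observation~\ref{obs:centers}, $H_k[A]$ contains $H_{k-1}-\infty$, except for the edges to $4k-7$ added from $4k-10$ and $4k-9$. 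Since $4k-7\notin A$, the graph $H_k[A]$ equals $H_{k-1}-\infty$ exactly once the wraparound cycle edge $\{4k-9,0\}$ is accounted for. That edge is replaced in $H_k$ by the path through $B$. This is where care is needed: in $H_{k-1}$, the vertices $4k-9$ and $0$ are cycle-adjacent, but in $H_k$ they need not be. If $U\cap A$ avoids the pair $\{0,4k-9\}$, or if that pair is an edge of $H_k$ anyway, then $U\cap A$ is independent in $H_{k-1}-\infty$ of size $k-1$, contradicting induction. Note that $4k-9\equiv 3\pmod 4$ is a center, and $0\notin N_k(4k-9)$ is unclear, so this needs checking.

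\emph{Remaining case (the main obstacle).} Here $U\supseteq\{0,4k-9\}$, or more generally $U\cap A$ fails to be independent in $H_{k-1}$ only through the edge $\{4k-9,0\}$. We must also have exactly one vertex $u\in U\cap B$, and $u$ must be non-adjacent to both $0$ and $4k-9$.
\begin{itemize}
\item $4k-8$ is cycle-adjacent to $4k-9$, and $4k-5$ is cycle-adjacent to $0$, so neither is possible.
\item $4k-7$ is adjacent to $4k-9$ via $N_k(4k-9)\ni v+2$.
\item So $u=4k-6$. But $0\in N_k(4k-6)$, since every center $v\equiv 2$ is joined to $0$.
\end{itemize}
This gives a contradiction.

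If instead $U\cap A$ has size at least $k$ and contains both $0$ and $4k-9$, I would delete $0$ and use the $H_{k-1}$ induction on the remaining $k-1$ vertices. That still leaves a set independent in $H_{k-1}-\infty$ after removal, again a contradiction.

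I expect the bookkeeping of exactly which wraparound edges differ between $H_{k-1}$ and $H_k$ to be the delicate step. Figure~\ref{fig:H4} suggests an alternative: if the induction is messy, restrict to a small set $T$ (the right-hand picture, $H_5[T]$) to show that any $U$ meets the last blocks in few vertices.
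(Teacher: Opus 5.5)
Your framework is the same as the paper's: induction on $k$, splitting off $B=[4k-8,4k-5]$, and using that $H_{k-1}$ is $H_k[V(G_{k-1})]$ plus the edge $\{0,4k-9\}$. The case $\{0,4k-9\}\subseteq U$ also works as you suggest. Every vertex of $B$ is adjacent to $0$ or to $4k-9$, so $U\cap B=\emptyset$, and deleting one of $0,4k-9$ leaves an independent $(k-1)$-set of $H_{k-1}-\infty$. (Also, $0\notin N_k(4k-9)$ is immediate, since only \emph{positive} multiples of $4$ are included.)

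The genuine gap is your claim that $B$ is a clique. The pair $\{4k-7,4k-5\}$ is \emph{not} an edge of $H_k$, for three reasons. First, the two vertices are at distance $2$ on the cycle. Second, $4k-7\equiv 1\pmod 4$ is not a center. Third, $4k-7\notin N_k(4k-5)=\{1,\infty\}\cup\{4j:0<4j<4k-5\}$, because the term $v+2=4k-3$ lies outside $V(G_k)$ and $4k-7\neq 1$ for $k\geq 3$. Already for $k=3$, the vertices $5$ and $7$ are non-adjacent in $H_3$. So $B$ induces $K_4$ minus an edge, and $U$ may contain both $4k-7$ and $4k-5$. In that case $0\notin U$, so $U\cap[0,4k-9]$ is independent in $H_{k-1}-\infty$. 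But the induction hypothesis then gives only $|U\cap[0,4k-9]|\le k-2$, i.e.\ $|U|\le k$, which is no contradiction.

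The missing idea is an extra bound for the case $U\cap B=\{4k-7,4k-5\}$. Within $[0,4k-9]$, the neighbors of $4k-7$ are $4k-10$ and $4k-9$, since $4k-7$ lies in $N_k(4k-10)$ and in $N_k(4k-9)$. The neighbors of $4k-5$ there are $0$, $1$ and all positive multiples of $4$. Hence
\[
U\cap[0,4k-9]\subseteq T=\{j: 2\le j\le 4k-11,\ j\not\equiv 0\pmod 4\}.
\]
The graph $H_k[T]$ is spanned by the $k-3$ disjoint triangles $\{4i+2,4i+3,4i+5\}$ for $0\le i\le k-4$. Each consists of the cycle edge $\{4i+2,4i+3\}$, together with $4i+5\in N_k(4i+2)$ as $v+3$ and $4i+5\in N_k(4i+3)$ as $v+2$. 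Therefore $|U\cap T|\le k-3$ and $|U|\le k-1$. This is exactly what the set $T$ in the right half of Figure~\ref{fig:H4} is for. In your write-up it appears only as an optional fallback, but without it the induction does not close.
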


\begin{proof}
  We proceed by induction on $k$. For $k=2$, the graph $H_2-\infty$ is the complete graph on $4$ vertices, which contains no independent $2$-set.

  Assume $k \geq 3$ and let $A$ be an independent set in $H_k-\infty$. By Observation~\ref{obs:centers},
  \[
    H_{k-1} = H_k[V(G_{k-1})] + \{0,4k-9\},
  \]
  so $A \cap V(G_{k-1}-\infty)$ is independent in $H_{k-1}-\infty$, unless it contains both $0$ and $4k-9$.

  If $A$ contains both $0$ and $4k-9$, then $A \cap [4k-8,4k-5] = \emptyset$, and since $A \setminus \{4k-9\}$ is an independent set in $H_{k-1}-\infty$, we have $|A|-1 \leq k-2$ by the induction hypothesis, whence $|A| \leq k-1$.

  If $A \cap V(G_{k-1}-\infty)$ is independent in $H_{k-1}-\infty$, then $|A \cap V(H_{k-1}-\infty)| \leq k-2$ by the induction hypothesis. If $|A \cap [4k-8,4k-5]| \leq 1$, then $|A| \leq k-1$, as required. If $|A \cap [4k-8,4k-5]| \geq 2$, then $A$ contains both $4k-7$ and $4k-5$. The vertex $4k-7$ is adjacent to $4k-9$ and $4k-10$, and $4k-5$ is adjacent to $0$, $1$, as well as all $4j$ for $0 < 4j < 4k-5$. Therefore,
  \[
    A \cap V(G_{k-1}-\infty) \subseteq T:= \{j : 2 \leq j \leq 4k-11, j \not \equiv 0 \pmod 4\}.
  \]
  The graph $H_k[T]$ contains a disjoint union of $k-3$ triangles as a spanning subgraph (see Figure~\ref{fig:H4}); therefore, $|A \cap V(G_{k-1}-\infty)| = |A \cap T| \leq k-3$, so $|A| \leq k-1$ again.
\end{proof}

The following lemma shows that vertices of the third type are in some $\mathcal{C}_k(v)$.

\begin{lemma}
  Let $k \geq 2$ and let $B$ be an independent $(k-1)$-set of $G_k-\infty$ containing no center. Then $B \cup \{\infty\} \supseteq N_k(v)$ for some center $v$.
\end{lemma}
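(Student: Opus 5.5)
The plan is a direct case analysis, using the rigid structure that $B$ must have. The non-centers of the cycle of $G_k$ are the vertices $v \equiv 0,1 \pmod 4$. They come in the $k-1$ pairs $\{4i,4i+1\}$, $i\in[0,k-2]$, and each pair is an edge of the cycle. Since $B$ is independent, has $k-1$ elements and contains no center, it must contain \emph{exactly one} vertex from each pair. Conversely, any such choice is independent, although we will not need this direction.

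So $B$ is described by the choice, for each $i\in[0,k-2]$, of whether $4i\in B$ or $4i+1\in B$. The key fact used repeatedly is: for $i\le k-3$, if $4(i+1)\notin B$, then $4i+5 = 4(i+1)+1\in B$. We also record that $v+3$ (for $v=4i+2$) and $v+2$ (for $v=4i+3$) both equal $4i+5$. This vertex lies in $V(G_k)$ exactly when $i\le k-3$, and for $i=k-2$ it is simply absent from $N_k(v)$.

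\textbf{Case 1: $0\in B$.} Let $i$ be the largest index in $[0,k-2]$ with $4i\in B$; it exists since $0\in B$. Take $v=4i+2$, so that $N_k(v)=\{0,4i,\infty\}\cup(\{4i+5\}\cap V(G_k))$. We have $0,4i\in B$ and $\infty\in B\cup\{\infty\}$. If $i=k-2$, the vertex $4i+5$ does not exist and we are done. Otherwise $4(i+1)\notin B$ by maximality, so $4i+5\in B$.

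\textbf{Case 2: $1\in B$.} Let $i\ge 0$ be the largest index in $[0,k-2]$ such that $4,8,\dots,4i$ all lie in $B$; here $i=0$ means the list is empty. Take $v=4i+3$, so that $N_k(v)=\{1,\infty\}\cup\{4j:0<4j<4i+3\}\cup(\{4i+5\}\cap V(G_k))$. The elements $1$ and $4,\dots,4i$ lie in $B$ by assumption. If $i=k-2$ we are done. Otherwise $4(i+1)\notin B$ by maximality, hence $4i+5\in B$.

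The base case $k=2$ is covered by these cases: $B=\{0\}$ or $\{1\}$, with $N_2(2)=\{0,\infty\}$ and $N_2(3)=\{1,\infty\}$.

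There is no real obstacle here. The only points needing care are the boundary conventions: the truncation $\{v+3\}\cap V(G_k)$ at $i=k-2$, the degenerate empty range in Case 2, and the fact that $v-2=0$ when $i=0$ in Case 1. Each has to be checked against the definition of $N_k(v)$.
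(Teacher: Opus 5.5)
Your proof is correct, and it takes a different route from the paper's.

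The paper argues by induction on $k$. It restricts $B$ to $B'=B\cap[0,4k-9]$ and applies the induction hypothesis in $G_{k-1}$ to get a center $v$ with $N_{k-1}(v)\subseteq B'\cup\{\infty\}$. It then uses Observation~\ref{obs:centers} to lift to $G_k$. It keeps $v$ when $v\le 4k-13$, or when $v\in\{4k-10,4k-9\}$ and $4k-7\in B$. It shifts $v$ to $4k-6$ or $4k-5$ when $v\in\{4k-10,4k-9\}$ and $4k-8\in B$.

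You instead unroll this recursion and name the center explicitly. You use only the matching $\{4i,4i+1\}$ and the fact that $4(i+1)\notin B$ forces $4i+5\in B$.

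In Case~2, your center $4i+3$ is exactly the one the paper's induction produces. In Case~1 the choices differ: the paper's recursion stops at the end of the initial run $0,4,\dots,4i\in B$, whereas you take the last multiple of $4$ in $B$. Both choices work, since all that is needed is $4i\in B$, together with $4i+5\in B$ unless $i=k-2$.

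Your version is shorter, self-contained, and does not need Observation~\ref{obs:centers}. The paper's inductive form mirrors the induction in Lemma~\ref{lem:independence}, so both halves of the covering argument run on the same recursive description of $N_k$ and $H_k$.
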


\begin{proof}
  First, observe that the non-centers induce a perfect matching of size $k-1$, formed by the edges $\{4i,4i+1\}$ with $0 \leq i \leq k-2$. Since $|B|=k-1$, $B$ must contain exactly one vertex from each edge of the matching. In particular, $B$ contains exactly one of $4k-8$ and $4k-7$.

  We will prove the lemma by induction. For $k=2$ there are only two possibilities, namely $B = \{0\}$ and $B = \{1\}$, and $N_2(2) = \{0,\infty\}$ while $N_2(3) = \{1,\infty\}$.

  Assume $k \geq 3$, and set $B' = B \cap [0,4k-9]$. By the first paragraph, we have $|B'| = k-2$. The vertex $4k-9$ is a center, so $B'$ is an independent set of the cycle $C_{4k-8}$ in $G_{k-1}$, containing no center of $G_{k-1}$. By the induction hypothesis, there is a center $v$ of $G_{k-1}$ such that $B' \cup \{\infty\} \supseteq N_{k-1}(v)$.

  By Observation~\ref{obs:centers}, it suffices to consider three cases. If $v \leq 4k-13$, then $N_k(v) = N_{k-1}(v) \subseteq B \cup \{\infty\}$. If $v \in \{4k-10, 4k-9\}$ and $4k-7 \in B$, then $N_k(v) = N_{k-1}(v) \cup \{4k-7\} \subseteq B \cup \{\infty\}$.

  Finally, suppose that $v \in \{4k-10, 4k-9\}$ and $4k-7 \notin B$, so $4k-8 \in B$. If $v = 4k-10$, then $N_{k-1}(4k-10) = \{0,4k-12,\infty \} \subseteq B \cup \{\infty\}$, so $0 \in B$ and hence $N_k(4k-6) = \{0,4k-8,\infty\} \subseteq B \cup \{\infty\}$. If $v = 4k-9$, then $N_{k-1}(4k-9) = \{1,\infty\} \cup \{4j : 0 < 4j < 4k-9\} \subseteq B \cup \{\infty\}$, and therefore
  \begin{align*}
    N_k(4k-5) &= \{1,\infty\} \cup \{4j : 0 < 4j < 4k-5\}\\
              &= N_{k-1}(4k-9) \cup \{4k-8\} \subseteq B \cup \{\infty\},
  \end{align*}
  as required.
\end{proof}

This completes the proof of Theorem~\ref{thm:(4k-4)-cycle}.

\subsection{Proof of Theorem~\ref{thm:(4k-5)-cycle}}
For $k=2$ the graph $\KG(C_3 \sqcup K_1,2)$ has no edges, so its chromatic number is $1 = 2k-3$. We may therefore assume that $k \geq 3$. We will construct the required coloring of $\KG(C_{4k-5} \sqcup K_1, k)$ from the one constructed in the proof of Theorem~\ref{thm:(4k-4)-cycle}. 

Label the isolated vertex of the elements of $C_{4k-5} \sqcup K_1$ by $\infty$, and the vertices of $C_{4k-5}$ by $0,1,\star,4,5 \dots, 4k-5$ in cyclic order. We use the same notation for the color classes of $\KG(C_{4k-4} \sqcup K_1,k)$ as in the proof of Theorem~\ref{thm:(4k-4)-cycle}.

Since no independent set of $G_k$ contains both $2$ and $3$, the map
\[
  f(A) =
    \begin{cases}
      (A \setminus \{2,3\}) \cup \{\star\} & \text{ if } \{2,3\} \cap A \neq \varnothing \\
      A                                    & \text{ otherwise}
    \end{cases}
\]
sends every vertex of $\KG(G_k,k)$ to a $k$-subset of $V(C_{4k-5} \sqcup K_1)$.

For each center $v \geq 6$ of $G_k$, let $\mathcal K_k(v)$ consist of the vertices of $\KG(C_{4k-5} \sqcup K_1, k)$ in $f(\mathcal C_k(v))$, and let $\mathcal{K}_k(\star)$ be the independent $k$-sets of $C_{4k-5} \sqcup K_1$ containing at least two elements of $\{5,\star,\infty\}$. As $G_k$ has $2k-2$ centers, this is a family of size $2k-3$, so it suffices to show that each member of the family is intersecting and that together they cover $\KG(C_{4k-5} \sqcup K_1,k)$.

$\mathcal{K}_k(\star)$ forms an intersecting family by the pigeonhole principle. Next, let $A, B \in \mathcal{K}_k(v)$, where $6 \leq v \leq 4k-5$ and $v \equiv 2,3 \pmod 4$. If neither $A$ nor $B$ contains $\star$, then $A, B \in \mathcal{C}_k(v)$, so $A \cap B \neq \varnothing$. If both $A$ and $B$ contain $\star$, then $A \cap B \neq \emptyset$. Finally, if $A$ contains $\star$ but $B$ does not, then $B \in \mathcal{C}_k(v)$ and $A = f(A')$, for some $A' \in \mathcal{C}_k(v)$. Hence, $A' \cap B \neq \emptyset$, and since $B$ contains neither $2$ nor $3$, they intersect in a vertex of $A' \setminus \{2,3\} \subseteq A$. Similarly for the case where $A$ does not contain $\star$ and $B$ does.

To show that these classes form a covering, note that $N_k(2) = \{0,5,\infty\}$ and $N_k(3) = \{1,5,\infty\}$, and let $A$ be a vertex of $\KG(C_{4k-5} \sqcup K_1,k)$.

Suppose $\star \notin A$. Then $A$ is a vertex of $\KG(G_k,k)$ containing neither $2$ nor $3$, so $A \in \mathcal C_k(v)$ for some center $v$. If $v \geq 6$, then $f(A) = A$ and $A \in \mathcal K_k(v)$. If $v \in \{2,3\}$, then $v \notin A$, so by the definition of $\mathcal C_k(v)$, $N_k(v) \subseteq A$. In particular, $5, \infty \in A$ and $A \in \mathcal{K}_k(\star)$.

Now suppose $\star \in A$. If $A$ contains $5$ or $\infty$, then $A \in \mathcal{K}_k(\star)$. Otherwise set $A' = (A \setminus \{\star\}) \cup \{3\}$. Since $\star \in A$ we have $1,4 \notin A$, and $2 \notin V(C_{4k-5}\sqcup K_1)$. As the neighbors of $3$ in $G_k$ are $2$ and $4$, the set $A'$ is independent in $G_k$, and clearly $f(A') = A$. Now $2 \notin A'$ and $N_k(2) \not\subseteq A'$ because $5 \notin A'$, so $A' \notin \mathcal C_k(2)$ and $A'$ is disjoint from $N_k(3) = \{1,5,\infty\}$, so $A' \notin \mathcal C_k(3)$. Hence $A' \in \mathcal C_k(v)$ for some center $v \geq 6$, and $A = f(A') \in \mathcal K_k(v)$.

This completes the proof of Theorem~\ref{thm:(4k-5)-cycle}.

\subsection{Application to an upper bound}

\begin{lemma}\label{lem:homomorphism}
  For all integers $k\geq 2$ and $r\geq 2k-2$, there exists a homomorphism from $\KG(C_{r+2} \sqcup {K_1},k)$ to $\KG(C_{r} \sqcup K_1,k-1)$.
\end{lemma}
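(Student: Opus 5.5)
The plan is to define the homomorphism as a map sending each vertex $A$ of $\KG(C_{r+2}\sqcup K_1,k)$ to a $(k-1)$-element \emph{subset} $f(A)\subseteq A$. This is enough: if $A\cap B=\varnothing$ then $f(A)\cap f(B)\subseteq A\cap B=\varnothing$, so adjacency is preserved automatically. Everything therefore reduces to choosing, for each independent $k$-set $A$, some $k-1$ of its elements that form an independent set in the smaller graph. The hypothesis $r\geq 2k-2=2(k-1)$ guarantees that $\KG(C_r\sqcup K_1,k-1)$ is well defined. If the paper's framework requires $k-1\geq 2$, the case $k=2$ can be handled trivially or excluded as the context demands.

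For the labelling, the cycle $C_{r+2}$ has vertices $0,1,\dots,r+1$ in cyclic order and the isolated vertex is $\infty$. The cycle $C_r$ is identified with the vertices $0,\dots,r-1$ of $C_{r+2}$, where $r-1$ and $0$ are adjacent in $C_r$ but not in $C_{r+2}$, and the isolated vertex is again $\infty$. Two vertices of $[0,r-1]\cup\{\infty\}$ adjacent in $C_r\sqcup K_1$ but not in $C_{r+2}\sqcup K_1$ can only be the pair $\{0,r-1\}$. Since $A$ is independent in $C_{r+2}$, it contains at most one of $r,r+1$.

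The map $f$ is defined by cases.
\begin{enumerate}
  \item If $r\in A$, let $f(A)=A\setminus\{r\}$. Then $r-1\notin A$, so $f(A)\subseteq[0,r-2]\cup\{\infty\}$ is independent in $C_r\sqcup K_1$.
  \item If $r+1\in A$, let $f(A)=A\setminus\{r+1\}$. Then $0\notin A$, so $f(A)\subseteq[1,r-1]\cup\{\infty\}$ is independent in $C_r\sqcup K_1$.
  \item If $A\cap\{r,r+1\}=\varnothing$ and $\{0,r-1\}\subseteq A$, let $f(A)=A\setminus\{0\}$, which no longer contains the problematic pair.
  \item Otherwise, remove an arbitrary but fixed element, say the smallest one. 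The resulting set lies in $[0,r-1]\cup\{\infty\}$ and does not contain both $0$ and $r-1$, so it is independent in $C_r\sqcup K_1$.
\end{enumerate}

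In each case $f(A)$ is an independent $(k-1)$-set of $C_r\sqcup K_1$ contained in $A$, so by the first paragraph $f$ is a homomorphism. There is no genuine obstacle; the only point needing care is checking the wrap-around pair $\{0,r-1\}$ in each case, which is exactly what the case distinction handles.
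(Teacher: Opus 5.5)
Your proof is correct and is essentially the paper's argument: send each $A$ to a $(k-1)$-subset of itself that is independent in $C_r \sqcup K_1$, so disjointness is preserved automatically. The paper just replaces your four cases with the single rule ``delete the largest element''. It labels the isolated vertex $0$ and the cycles $1,\dots,r+2$ and $1,\dots,r$; this rule removes $r+1$ or $r+2$ when present, and removes $r$ whenever both endpoints of the only new edge $\{1,r\}$ lie in $A$.
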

      
\begin{proof}
  In each of the two graphs under consideration, we label the vertex of $K_1$ with 0, and the vertices of $C_{r+2}$ and $C_{r}$ with $[r+2]$ and $[r]$, respectively, in the natural order. We define $f:\KG(C_{r+2}\sqcup K_1,k)\to\KG(C_r\sqcup K_1,k-1)$ as follows. For each vertex $A$ of $\KG(C_{r+2}\sqcup K_1,k)$, we form $f(A)$ by removing the largest element of $A$. Clearly $|f(A)|=k-1$.
  
  First note that $\{r+1,r+2\}\cap f(A)=\emptyset$, which shows $f(A)\subseteq V(C_r\sqcup K_1)$. To see that $f(A)$ is independent in $C_r\sqcup K_1$, observe that since $A$ is independent in $C_{r+2}\sqcup K_1$, the only consecutive elements on $C_r$ that could both be in $f(A)$ are 1 and $r$. But then neither $r+1$ nor $r+2$ can be in $A$, making $r$ the largest element of $A$. This implies $r \notin f(A)$. Therefore $f(A)$ is a vertex of $\KG(C_r\sqcup K_1, k-1)$.

  Finally, note that if $AB$ is an edge of $\KG(C_{r+2}\sqcup K_1,k)$ then $A\cap B=\emptyset$, so by definition $f(A)\cap f(B)=\emptyset$ as well.
\end{proof}

\begin{corollary}\label{cor:med_cycle}
  For all integers $k, t$ such that $k\geq 2$ and $2k+1 \leq t \leq 4k-4$, we have $\chi(\KG(C_{t}\sqcup K_1,k)) \leq n-2k+1$, where $n=|V(C_{t}\sqcup K_1)|=t+1$.
\end{corollary}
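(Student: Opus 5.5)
We want $\chi(\KG(C_t\sqcup K_1,k))\le t-2k+2$ for $2k+1\le t\le 4k-4$. The plan is to reduce to one of the two extremal cases, Theorem~\ref{thm:(4k-4)-cycle} or Theorem~\ref{thm:(4k-5)-cycle}, by composing homomorphisms from Lemma~\ref{lem:homomorphism}. Each application lowers both the cycle length and $k$, and leaves the target bound unchanged. A homomorphism $\KG(G,k)\to\KG(G',k')$ gives $\chi(\KG(G,k))\le\chi(\KG(G',k'))$.

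Write $t=4k-4-s$ with $0\le s\le 2k-5$. We apply Lemma~\ref{lem:homomorphism} $m$ times, passing from $(C_t\sqcup K_1,k)$ to $(C_{t-2m}\sqcup K_1,k-m)$. We choose $m$ so that the resulting cycle length equals $4(k-m)-4$ or $4(k-m)-5$.

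If $s=2m$ is even, then $t-2m=4k-4-4m=4(k-m)-4$. If $s=2m+1$ is odd, then $t-2m=4(k-m)-5$. In both cases set $k'=k-m$.

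Next we check the hypotheses of the lemma at each step. At step $i$ (for $0\le i<m$) the current parameters are $k-i\ge2$ and $r=t-2i-2$. We need $r\ge 2(k-i)-2$, that is, $t-2i\ge 2k$. This holds because $t-2i\ge t-2m+2\ge 4k'-5+2>2k'$, and one checks it is at least $2(k-i)$. We also need $k'\ge2$. From $s\le 2k-5$ we get $m\le k-3$, so $k'\ge3$, which is fine for either theorem.

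Finally we apply the theorems. In the even case, Theorem~\ref{thm:(4k-4)-cycle} gives $\chi\le 2k'-2$, and
\[
n-2k+1=t+2-2k=4k-2-2m-2k=2(k-m)-2=2k'-2.
\]
In the odd case, Theorem~\ref{thm:(4k-5)-cycle} gives $\chi\le 2k'-3$, and
\[
t+2-2k=2k-2-2m-1=2k'-3.
\]
Both match the claimed bound.

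The only delicate step is the bookkeeping that keeps $r\ge 2k-2$ along the chain. It holds comfortably because $t\ge 2k+1$ while $k$ and $t$ decrease in lockstep.
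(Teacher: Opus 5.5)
Your proof is correct and is essentially the paper's: your $m=\lfloor s/2\rfloor$ with $s=4k-4-t$ is the paper's $m=\lfloor 2k-2-t/2\rfloor$, and like the paper you iterate Lemma~\ref{lem:homomorphism} $m$ times to reach $C_{4k'-4}\sqcup K_1$ or $C_{4k'-5}\sqcup K_1$ with $k'=k-m\geq 3$, then apply Theorem~\ref{thm:(4k-4)-cycle} or Theorem~\ref{thm:(4k-5)-cycle} with the same arithmetic. There is one slip in your hypothesis check: the condition at step $i$, $t-2i-2\geq 2(k-i)-2$, is equivalent to $t\geq 2k$, not to $t-2i\geq 2k$ (which can fail, e.g.\ $k=5$, $t=11$, $i=1$), and your bound $t-2i>2k'$ does not imply it; but $t\geq 2k$ holds trivially, as your closing remark that $t$ and $k$ decrease in lockstep correctly says.
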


\begin{proof}
  Put $m = \lfloor 2k-2-t/2 \rfloor$, and observe that $0 \leq m \leq k-3$, and therefore $k-m \geq 3$ and $t-2m \geq 7$. We can thus apply Lemma~\ref{lem:homomorphism} $m$ times to obtain a homomorphism from $\KG(C_t \sqcup K_1,k)$ to $\KG(C_{t-2m}\sqcup K_1,k-m)$. In particular, $\chi(\KG(C_{t}\sqcup K_1,k)) \leq \chi(\KG(C_{t-2m}\sqcup K_1,k-m))$.

  If $t$ is even, then $t+2m = 4k-4$, so $t-2m = 4(k-m)-4$, and by Theorem~\ref{thm:(4k-4)-cycle},
  \[
    \chi(\KG(C_{t-2m}\sqcup K_1,k-m)) \leq 2(k-m)-2 = t-2k+2.
  \]
  Similarly, if $t$ is odd, then $t+2m = 4k-5$, so $t-2m = 4(k-m)-5$, and by Theorem~\ref{thm:(4k-5)-cycle},
  \[
    \chi(\KG(C_{t-2m}\sqcup K_1,k-m)) \leq 2(k-m)-3 = t-2k+2,
  \]
  as required.
\end{proof}

We summarize our results about short cycles as follows.

\begin{corollary} \label{cor:short_cycles}
  Let $k \geq 2$ and let $G$ be a graph on $n \geq 2k+1$ vertices. If $G$ contains a cycle of length at most $4k-4$ and $G$ is not itself a cycle, then $\chi(\KG(G,k)) \leq n-2k+1$.
\end{corollary}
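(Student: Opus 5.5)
The plan is to reduce to one of the small configurations already handled, and then pay for the remaining vertices with stars via Lemma~\ref{lem:subgraph}. Let $C$ be a cycle of length $t\le 4k-4$ in $G$. If $G$ has a vertex of degree at least $3$, Corollary~\ref{cor:degree-3} finishes the proof. So we may assume $\Delta(G)\le 2$. Then $C$ is a whole component of $G$. Since $G$ is not itself a cycle, there is a vertex $x\notin V(C)$.

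We split according to $t$ and choose a subgraph $H$ of $G$ in each case.

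\textbf{Case $t\ge 2k+1$.} Take $H$ to be $C$ together with $x$. Then $H=C_t\sqcup K_1$, because $C$ is a component. Corollary~\ref{cor:med_cycle} gives $\chi(\KG(H,k))\le t-2k+2$. Lemma~\ref{lem:subgraph} then gives
\[
\chi(\KG(G,k))\le t-2k+2+(n-t-1)=n-2k+1 .
\]

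\textbf{Case $t\le 2k$.} Write $t=2k-2j-1$ or $t=2k-2j$; note $t\ge 3$ forces $j\le k-2$. We want $H$ to consist of $C$ together with $2j+1$ vertices outside $C$, spanning no edges of $G$. The count works: $n\ge 2k+1$ gives $n-t\ge 2j+1$ in both parities, so enough outside vertices exist. Taking $H$ as a subgraph of $G$ containing only the edges of $C$ is allowed. Indeed, in Lemma~\ref{lem:subgraph} we only need $\KG(G,k)$ restricted to sets inside $V(H)$ to be a subgraph of $\KG(H,k)$. This holds because deleting edges from $H$ only enlarges the family of independent sets. So $H$ is the disjoint union of $C$ and $2j+1$ isolated vertices.

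Lemma~\ref{lem:odd_cycle} (odd $t$) gives $\chi(\KG(H,k))\le 1$, and Lemma~\ref{lem:even_cycle} (even $t$) gives $\chi(\KG(H,k))\le 2$. Lemma~\ref{lem:subgraph} then yields, for odd $t$,
\[
\chi(\KG(G,k))\le 1+n-(2k-2j-1)-(2j+1)=n-2k+1 ,
\]
and for even $t$,
\[
\chi(\KG(G,k))\le 2+n-(2k-2j)-(2j+1)=n-2k+1 .
\]

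The only delicate point is that Lemma~\ref{lem:subgraph} is applied to a non-induced subgraph $H$. This is fine because it holds for any subgraph: $k$-sets inside $V(H)$ that are independent in $G$ are also independent in $H$, and disjointness does not depend on the edges. Everything else is bookkeeping with the vertex counts.
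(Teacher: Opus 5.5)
Your proof is correct and follows essentially the same route as the paper. Both use the split into $t\le 2k$, handled by Lemmas~\ref{lem:odd_cycle} and~\ref{lem:even_cycle} applied to $C_t$ plus $2j+1$ isolated vertices, and $2k+1\le t\le 4k-4$, handled by Corollary~\ref{cor:med_cycle} applied to $C_t\sqcup K_1$, combined via Lemma~\ref{lem:subgraph} (which is all the paper's appeal to Corollary~\ref{cor:monotone} amounts to); your preliminary reduction to $\Delta(G)\le 2$ via Corollary~\ref{cor:degree-3} is a small improvement, since it makes explicit why a vertex outside the cycle exists in the second case, which the paper's ``by assumption'' tacitly needs (e.g.\ for a Hamiltonian short cycle with chords).
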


\begin{proof}
  First suppose that $G$ contains a cycle $C_t$ with $t<2k+1$ vertices. Let $H=C_t\sqcup U$ where $U$ consists of $2k+1-t$ vertices if $t$ is even, and $2k-t$ isolated vertices if $t$ is odd. Note that $H$ is a subgraph of $G$. If $t$ is even then by Lemma~\ref{lem:even_cycle} we find $\chi(\KG(H,k))=2=|V(H)| -2k+1$.  If $t$ is odd then by Lemma~\ref{lem:odd_cycle} we get $\chi(\KG(H,k))= 1 =|V(H)| -2k+1$.
    Hence in either case $H$ is not $k$-tight.

  Finally suppose $G$ contains a cycle $C_t$ where $2k+1\leq t\leq 4k-4$. Define $H=C_t\sqcup K_1$, which is a subgraph of $G$ by assumption. Then Corollary~\ref{cor:med_cycle} tells us that $H$ is not $k$-tight. 

  Thus in all cases,  Corollary~\ref{cor:monotone} confirms that $G$ is not $k$-tight, as required.
\end{proof}

Theorem~\ref{thm:main} now follows from Corollaries~\ref{cor:degree-3} and~\ref{cor:short_cycles}.

\section{Asymptotic lower bound on chromatic number}\label{sec:asymptotic}

Call an independent set $\mathcal A$ in $\SG(n,k)$ \emph{anchored} if some vertex
$U \in \mathcal A$, called an \emph{anchor}, satisfies $|U \cap W| \geq 2$ for all
$W \in \mathcal A$. Given integers $k$ and $n$, let $a(n,k)$ be the maximum size of
an anchored independent set in $\SG(n,k)$.

\begin{lemma}\label{lem:anchor}
  For every $k \geq 2$ and $n\geq 2k+1$ we have 
  \begin{equation}
    \label{eq:1}
    a(n,k)\leq\sum_{j=2}^k(j-1)\binom{n-j}{k-2}.
  \end{equation}
\end{lemma}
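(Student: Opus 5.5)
Fix an anchored independent set $\mathcal A$ in $\SG(n,k)$ with anchor $U=\{u_1<u_2<\dots<u_k\}$. The plan is a charging argument: assign to every $W\in\mathcal A$ a pair $\{u_i,u_j\}\subseteq U\cap W$ that is canonical with respect to $W$, and then count the sets $W$ that receive each pair. Because $|U\cap W|\ge 2$ for all $W\in\mathcal A$, the pair can be taken to be the two \emph{smallest} elements of $U\cap W$. That is, if $u_i<u_j$ are the two smallest elements of $U\cap W$, then $W$ contains $u_i$ and $u_j$ and avoids $u_1,\dots,u_{j-1}$ apart from $u_i$.

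For fixed $j\in[2,k]$ and $i<j$, the sets $W$ charged to $(i,j)$ are $k$-subsets containing $\{u_i,u_j\}$ and disjoint from the $j-2$ elements $\{u_1,\dots,u_{j-1}\}\setminus\{u_i\}$. Their remaining $k-2$ elements are therefore chosen from $n-2-(j-2)=n-j$ elements. This gives at most $\binom{n-j}{k-2}$ such $W$. Summing over the $j-1$ choices of $i$ and over $j=2,\dots,k$ yields exactly the right-hand side of \eqref{eq:1}.

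This count does not use independence in $C_n$ or the intersecting property of $\mathcal A$, only the anchor condition. We should double-check that the case $W=U$ is handled: $U$ itself is charged to $(1,2)$, so it is counted like any other set. The bound requires $n-j\ge k-2$, which holds since $n\ge 2k+1$; the hypothesis $n\ge 2k+1$ therefore seems to be there only so that the binomials make sense, or for later use.

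The main thing to check is that the charging is injective up to the stated counts, i.e.\ that the two smallest elements of $U\cap W$ determine the excluded set $\{u_1,\dots,u_{j-1}\}\setminus\{u_i\}$. This is immediate from the definition. So there is no real obstacle; the lemma is a direct union bound.
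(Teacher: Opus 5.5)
Your proposal is correct and is essentially the paper's proof: the paper labels $U$ by $[k]$, assigns to each $W\in\mathcal A$ the lexicographically smallest pair $\{i,j\}\subseteq W\cap U$, which is exactly the two smallest elements of $U\cap W$, and bounds each class by $\binom{n-j}{k-2}$. It then sums over the $j-1$ choices of $i$, just as you do.
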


\begin{proof}
  Let $\mathcal A$ be anchored with anchor $U$. Label the elements of $U$ by $[k]$. Then since each $W \in \mathcal A$ satisfies
  $|W\cap U|\geq 2$, we may define $p(W)$ to be the lexicographically smallest pair of elements of $[k]$ contained in $W$. Then for each pair $\{i,j\}$ with $1\leq i< j\leq k$, we have 
  \[
    |\{W\in{\mathcal A}:p(W)=\{i,j\}\}|\leq \binom{n-j}{k-2}.
  \] 
  Since for each $2 \leq j\leq k$ there are $j-1$ elements $i$ with $1\leq i<j$, we obtain the inequality~\eqref{eq:1}.
\end{proof}

In order to show Theorem~\ref{thm:cycle+dot} we show the following result on optimal colorings of Schrijver graphs. Below we use the basic fact that $|V(\SG(n,k)| = \tfrac{n}{k}\tbinom{n-k-1}{k-1}$ (see e.g.~\cite{Hav24,Kap43}). Note that for $n\geq k^2+k$,
\begin{equation}
  \label{eq:2}
  \frac n k \binom{n-k-1}{k-1} \geq \frac n k\left(\frac{n-k-1}{k-1}\right)^{k-1} \geq \frac{n^k}{k^k} 
\end{equation}
using the standard lower bound to the binomial coefficient in the first inequality, and the assumption $n\geq k^2+k$ in the second. 

\begin{theorem}\label{thm:rainbow}
  For every $k \geq 2$, there exists $n_0(k)$ such that for every $n \geq n_0(k)$ the following holds. For each optimal coloring of $\SG(n,k)$, there exists a stable $(k-1)$-set $F$ in the cycle $C_n$ such that all $n-2k+2$ colors appear on the subgraph $G_F$ induced by the stable $k$-sets in $C_n$ that are disjoint from $F$.
\end{theorem}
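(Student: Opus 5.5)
The plan is to find $F$ by counting. For a stable $(k-1)$-set $F$, a color $c$ is absent from $G_F$ exactly when every member of the color class $\mathcal A_c$ meets $F$; call such an $F$ a \emph{transversal} of $\mathcal A_c$. It therefore suffices to show that, for $n$ large, the number of pairs $(F,c)$ with $F$ a transversal of $\mathcal A_c$ is smaller than the number of stable $(k-1)$-sets in $C_n$. By the formula quoted before \eqref{eq:2}, that number is $\frac{n}{k-1}\binom{n-k}{k-2}$, which is of order $n^{k-1}/(k-1)!$ up to lower-order terms. Throughout, $\mathcal A_1,\dots,\mathcal A_{n-2k+2}$ are the color classes of a fixed optimal coloring; each is an intersecting family of stable $k$-sets.

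\emph{Step 1: counting transversals.} Fix a class $\mathcal A_c$ and a member $U$ of it. I would split its transversals into two kinds.
\begin{itemize}
\item[(a)] Transversals $F$ with $|F\cap U|\geq 2$. There are at most $\binom{k}{2}\binom{n}{k-3}=O(n^{k-3})$ of these, which is negligible even after summing over all $n-2k+2$ classes.
\item[(b)] Transversals meeting $U$ in exactly one vertex $u$. There are at most $k\binom{n}{k-2}$ of these per class, so crudely $O(kn^{k-1})$ in total. That total is of the same order as the number of all $F$, so this crude count does not suffice.
\end{itemize}
The refinement will distinguish two kinds of classes.
\begin{itemize}
\item[(i)] If $\mathcal A_c$ is \emph{star-like}, with almost all members containing one vertex $v_c$, then almost every transversal must contain $v_c$. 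There are only $\binom{n}{k-2}$ stable $(k-1)$-sets containing a given $v_c$, giving roughly $n\cdot n^{k-2}/(k-2)!$ pairs in total. This beats $n^{k-1}/(k-1)!$ only if the centers $v_c$ are not too concentrated, so I would also use that sets through a common vertex are counted once per vertex.
\item[(ii)] If $\mathcal A_c$ is \emph{far from a star}, I would use Lemma~\ref{lem:anchor}. Choose $U$ as a member maximizing the number of other members it meets in at least two vertices. Then the members meeting $U$ in exactly one vertex fall into at most $k$ sub-stars. A transversal of kind (b) must also hit every member of those sub-stars not through $u$, which forces further elements of $F$ and costs a factor of $n$.
\end{itemize}

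\emph{Step 2: using optimality.} Counting alone cannot work for arbitrary colorings, so optimality must enter. I would show that, after discarding $O(1)$ exceptional colors, every class has size at least $c_k n^{k-1}$ and is essentially a star, with distinct classes having distinct centers. The argument would be as follows. If many classes were small, say each of size $O(a(n,k))=O(k^2 n^{k-2})$ by Lemma~\ref{lem:anchor}, their union would have size $o(n^{k-1})$. I would then try to recolor these vertices into the remaining classes and contradict Schrijver's bound $\chi(\SG(n,k))=n-2k+2$. The contradiction would come from comparing the remaining classes with a coloring of $\SG(n-2,k)$ obtained by a restriction or homomorphism argument in the style of Lemma~\ref{lem:homomorphism}. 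The volume bound \eqref{eq:2}, $|V(\SG(n,k))|\geq n^k/k^k$, shows the average class has size $\Omega(n^{k-1}/k^k)$, which supports this dichotomy.

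\emph{Step 3: assembly.} Once all but boundedly many classes are large stars with distinct centers, the transversals of class $c$ are essentially the $F$ containing $v_c$, up to an $O(n^{k-3})$ error. Summing over classes gives the number of pairs $(F,v)$ with $v\in F$ and $v$ a center, plus $o(n^{k-1})$. I then need this to be strictly less than the number of stable $(k-1)$-sets. This is where I expect the main obstacle, since the count of $F$ meeting the set of centers is comparable to all $F$ when there are about $n$ centers. What should save the argument is that at least $2k-2$ vertices are not centers, because there are only $n-2k+2$ colors. I would therefore choose $F$ inside the non-centers, together with one vertex from each exceptional class chosen to avoid it. Making this rigorous, and controlling the non-star exceptional classes via Lemma~\ref{lem:anchor} so that such an $F$ avoids being a transversal of them, is the delicate part, and it determines $n_0(k)$.
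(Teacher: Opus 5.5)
Your proposal has a genuine gap. The counting reduction is false, Step~2's structural claim is false, and Step~3 is left open. First, the inequality you aim for in your opening paragraph does not hold for $k\geq 3$. Take the standard optimal coloring: class $i$ consists of the stable $k$-sets with minimum element $i$, for $1\leq i\leq n-2k+1$, and there is one extra singleton class. Every stable $(k-1)$-set containing $i$ is then a transversal of class $i$, so there are at least $(n-2k+1)\binom{n-k}{k-2}$ pairs $(F,c)$. There are only $\frac{n}{k-1}\binom{n-k}{k-2}$ stable $(k-1)$-sets, and the former exceeds the latter once $(k-2)n>(k-1)(2k-1)$. A typical $F$ is a transversal of $k-1$ classes, so the pair count exceeds the number of sets $F$.

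Second, the claim in Step~2 that only $O(1)$ classes are small is false. Fix an interval $R$ of $2k-2+m<n$ consecutive vertices of $C_n$. Give every stable $k$-set meeting $V(C_n)\setminus R$ the color of one of its vertices outside $R$; this uses $n-2k+2-m$ colors. The remaining stable $k$-sets are stable sets of a path on $2k-2+m$ vertices, and you can color them in the standard way with $m$ further colors. This coloring is optimal, and $m$ of its classes have size $O(m^{k-1})$. With $m=\lfloor\sqrt n\rfloor$ you get unboundedly many classes of size $o(n^{k-1})$. Lemma~\ref{lem:anchor} does not rescue this, since it bounds \emph{anchored} classes, not small ones. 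So the fallback in Step~3, choosing $F$ among non-centers, rests on nothing proved.

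The missing idea is that you need only \emph{one} good color class, and $F$ should be carved out of a vertex of $\SG(n,k)$ rather than found by counting. Anchored classes have size at most $a(n,k)\leq k^2n^{k-2}$. By \eqref{eq:2}, for large $n$ we have $(n-2k+2)\,a(n,k)<|V(\SG(n,k))|$, so some class $\mathcal A$ is non-anchored. By optimality, some $U\in\mathcal A$ has neighbors of all $n-2k+1$ other colors. Otherwise you could recolor each member of the independent set $\mathcal A$ with a color missing from its neighborhood, giving a coloring with fewer colors. Since $U$ is not an anchor and $\mathcal A$ is intersecting, some $W\in\mathcal A$ has $|U\cap W|=1$. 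Set $F=U\setminus W$, which is a stable $(k-1)$-set. Every neighbor of $U$ avoids $F$, and so does $W$, so $G_F$ carries every color. You already had Lemma~\ref{lem:anchor} and \eqref{eq:2}; the paper uses them for this local argument rather than a global dichotomy.
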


\begin{proof}
  Let $k\geq 2$ be fixed. We claim that $a(n,k) \leq k^2n^{k-2}$. Indeed, the binomial coefficient at the right hand side of~\eqref{eq:1} is clearly at most $n^{k-2}$ for any $j$, which implies the stated bound.

  By comparing this with~\eqref{eq:2}, it follows that there is a minimum integer $n_0(k)$ such that for every $n \geq n_0(k)$,
  \[
    (n-2k+2) \cdot a(n,k) < |V(\SG(n,k))| = \frac{n}{k}\binom{n-k-1}{k-1}.
  \]

  Now fix $n \geq n_0(k)$, and fix an optimal coloring of $\SG(n,k)$. By the above inequality, there must be at least one color class $\mathcal A$ that is non-anchored. Observe that there must be a vertex $U \in \mathcal A$ that is adjacent to at least one vertex of every color different from $\mathcal{A}$, otherwise we could recolor the vertices of $\mathcal A$ by other colors and obtain
  an $(n-2k+1)$-coloring of $\SG(n,k)$, which is impossible.

  Since $\mathcal A$ has no anchor, there exists a vertex $W \in \mathcal A$ such that $|U \cap W| = 1$. Set $F = U \setminus W$; since $F$ is a $(k-1)$-element subset of the stable set $U$, it is itself stable. Every neighbor $X$ of $U$ in $\SG(n,k)$ satisfies $X \cap U = \emptyset$, so a fortiori, $F \cap X = \emptyset$. Moreover, $F \cap W = (U\setminus W) \cap W = \emptyset$. Hence, the subgraph $G_F$ contains all neighbors of $U$, plus the vertex $W$; it spans all $n-2k+2$ colors.
\end{proof}

\begin{proof}[Proof of Theorem~\ref{thm:cycle+dot}]
  Set $n(k) = n_0(k)+1$, where $n_0(k)$ is as in Theorem~\ref{thm:rainbow}, and let $n \geq n(k)$. Let $H=C_{n-1} \sqcup K_1$, where the isolated vertex is labeled $0$.

  Suppose $\KG(H,k)$ has a proper $(n-2k+1)$-coloring $c$. The stable $k$-tuples of $C_{n-1}$ induce a subgraph of $\KG(H,k)$ isomorphic to $\SG(n-1,k)$, which is $(n-2k+1)$-chromatic.

  Therefore, the restriction of $c$ to $\SG(n-1,k)$ is an optimal coloring of $\SG(n-1,k)$. By Theorem~\ref{thm:rainbow} applied to $\SG(n-1,k)$, there exists a stable $(k-1)$-tuple $F$ of $C_{n-1}$ such that the vertices of $\SG(n-1,k)$ disjoint from $F$ span all $n-2k+1$ colors. But then the vertex $F \cup \{0\}$ cannot be colored; a contradiction. Hence, $\chi(\KG(H,k))\geq n-2k+2$.
\end{proof}

\section{General lower bound}\label{sec:gen_lower_bd}

Conjecture~\ref{conj:main} asserts that disjoint unions of cycles of length at least $4k-3$ are $k$-tight. What if we allow shorter cycles? In this section we prove a lower bound on $\chi(\KG(G,k))$ for $G$ a disjoint union of $c$ cycles, with no restriction on their lengths, and show that it is best possible.

The following lemma plays a key role in the proof of the lower bound.

\begin{figure}
  \centering
  \begin{tikzpicture}[scale=1.5, every label/.style={label distance=6pt,anchor=center}]
    \foreach \i/\lab in {1/1, 2/2, 3/3, 4/a, 5/{a+1}, 6/6, 7/7, 8/m} {
      \pgfmathsetmacro{\ang}{90-45*(\i-1)}
      \node[vertex,label={\ang:{\scriptsize $\lab$}}] (\i) at (\ang:1) {};
    }
    \foreach \i in {1,...,7} { \path[semithick] (\i) edge (\the\numexpr\i+1\relax); }
    \path[semithick] (8) edge (1);
    \path[ultra thick] (1) edge (4) (5) edge (8);
  \end{tikzpicture}
  \caption{The graph $H$ of Lemma~\ref{lem:parallel_edges} for $m=8$, $a=4$.}
  \label{fig:parallel_chords}
\end{figure}

\begin{lemma}\label{lem:parallel_edges}
  Let $3 \leq a \leq m-3$, and let $H$ be the graph obtained from the $m$-cycle $C_m$, whose vertices are labeled $1, 2, \ldots, m$, by adding two chords $\{1,a\}$ and $\{a+1,m\}$. Then $\KG(G \sqcup C_{m-1},k)$ admits a homomorphism to $\KG(G \sqcup H, k)$, for every graph $G$.
\end{lemma}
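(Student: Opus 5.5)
The plan is to build the homomorphism almost as the identity map, with one local modification that repairs the only way an independent set can fail to be independent after the two chords are added.

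\emph{Setup.} Label the vertices of $C_{m-1}$ by $1,2,\dots,m-1$ in cyclic order, so $m-1$ is adjacent to $1$. Then $V(G\sqcup C_{m-1})\subseteq V(G\sqcup H)$, the only new vertex being $m\in V(H)$. On the copy of $G$ everything will be the identity.

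\emph{Which sets need fixing.} Let $A$ be an independent $k$-set of $G\sqcup C_{m-1}$, and ask when $A$ is still independent in $G\sqcup H$. The edges of $H$ on $[1,m-1]$ are the path $1,2,\dots,m-1$ together with the chord $\{1,a\}$. The edges $\{m-1,m\}$, $\{m,1\}$ and the chord $\{a+1,m\}$ all involve $m\notin A$. The path edges are also edges of $C_{m-1}$. Hence $A$ fails to be independent in $H$ exactly when $\{1,a\}\subseteq A$. Since $a\geq 3$, the vertices $1$ and $a$ are non-adjacent in $C_{m-1}$, so such sets do occur.

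\emph{The map.} Define
\[
  f(A)=\begin{cases}(A\setminus\{1\})\cup\{m\} & \text{if } \{1,a\}\subseteq A,\\ A & \text{otherwise.}\end{cases}
\]
Clearly $|f(A)|=k$. In the second case $f(A)$ is independent in $G\sqcup H$ by the previous paragraph.

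In the first case $A$ is independent in $C_{m-1}$ and contains both $1$ and $a$, so it avoids $2$, $m-1$, $a-1$ and $a+1$. The neighbors of $m$ in $H$ are $m-1$, $1$ and $a+1$. The first and last are not in $A$, and $1$ has been removed. The vertex $a$ is adjacent in $H$ only to $a-1$, $a+1$ and $1$, so it also has no neighbor in $f(A)$. The remaining vertices of $f(A)$ lie in $A\setminus\{1,a\}$. Each of them is different from $m$, $a$ and $1$, so the only $H$-edges at such a vertex inside $[1,m-1]$ are path edges, which are $C_{m-1}$-edges. Hence they have no $H$-neighbors in $f(A)$ either. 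So $f(A)$ is independent in $G\sqcup H$, and the hypotheses $3\le a\le m-3$ keep the relevant vertices distinct.

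\emph{Disjointness is preserved.} Suppose $A\cap B=\emptyset$. By construction $f(A)\subseteq A\cup\{m\}$ and $f(B)\subseteq B\cup\{m\}$. So $f(A)\cap f(B)$ could only be $\{m\}$, which would require both sets to be modified. But then $1\in A\cap B$, a contradiction. Thus $f$ maps edges to edges and is the required homomorphism.

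The only step requiring care is the verification that the modified set is independent, specifically the check of the neighborhood of $m$. This works because containing both $1$ and $a$ forces $m-1$ and $a+1$ out of $A$.
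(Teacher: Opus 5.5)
Your proposal is correct and follows the paper's proof essentially verbatim. You use the same map $f$, which swaps $1$ for $m$ exactly when $\{1,a\}\subseteq A$; the same independence check based on $m-1,a+1\notin A$; and the same observation that $f(A)\cap f(B)$ could only be $\{m\}$, which would force $1\in A\cap B$.
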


\begin{proof}
  Assume that the vertices of $C_{m-1}$ are numbered $1, 2, \ldots, m-1$ in cyclic order, and consider the mapping $f:V(\KG(G \sqcup C_{m-1},k)) \to V(\KG(G \sqcup H,k))$ defined by
  \[
    f(A) =
      \begin{cases}
        (A \setminus \{1\}) \cup \{m\} & \text{ if } \{1,a\} \subseteq A \\
        A                            & \text{ otherwise.}
      \end{cases}
  \]
  We will show that $f$ defines a homomorphism from $\KG(G \sqcup C_{m-1},k)$ to $\KG(G \sqcup H, k)$.

  Since $A \subseteq V(G) \cup V(C_{m-1})$, it follows that $m \notin A$, so $|f(A)| = |A| = k$. Next, we show that $f(A)$ is an independent set of $G \sqcup H$. We consider two cases.

  If $\{1,a\} \not \subseteq A$, then $f(A) = A$. Therefore, $f(A)$ does not induce any edge of $C_m$. Since $m \notin f(A)$, $f(A)$ does not induce the edge $\{a+1,m\}$. Finally, since $\{1,a\} \not \subseteq A = f(A)$, $f(A)$ does not induce the edge $\{1,a\}$. This shows that $f(A)$ is an independent set of $G \sqcup H$.

  If $\{1,a\} \subseteq A$, then $f(A) = (A \setminus \{1\}) \cup \{m\}$. Since $1\in A$ we know $m-1\notin A$, and so $f(A)$ does not induce any edge of $C_m$. The set $f(A)$ does not contain the element $1$, so it does not induce the edge $\{1,a\}$. Since $a \in A$ and $A$ is independent in $C_{m-1}$, $a+1 \notin A$, so $a+1 \notin f(A)$. Therefore $f(A)$ does not induce the edge $\{a+1,m\}$, so it is an independent set of $H$.

  Finally, it remains to show that $f$ maps edges of $\KG(G \sqcup C_{m-1},k)$ to edges of $\KG(G \sqcup H,k)$. Fix an edge $AB$ of $\KG(G \sqcup C_{m-1},k)$. At most one of $A$ and $B$ contains $\{1,a\}$. If neither does, then $f(A) \cap f(B) = A \cap B = \emptyset$. Otherwise, assume without loss of generality that $\{1,a\} \subseteq A$. Then $f(B) = B$ and
  \[
    f(A) \cap f(B) = \bigl((A \cap B) \setminus (\{1\} \cap B)\bigr) \cup (\{m\} \cap B) = \emptyset,
  \]
  since $m \notin B$. We have shown that $f$ is a homomorphism from $\KG(G \sqcup C_{m-1},k)$ to $\KG(G \sqcup H, k)$, as required.
\end{proof}

Theorem~\ref{thm:chi_bound} is a straightforward consequence of Lemma~\ref{lem:parallel_edges}.

\begin{proof}[Proof of Theorem~\ref{thm:chi_bound}]
  We argue by induction on $c$. The case $c=1$ corresponds to Schrijver's theorem.

  Now assume that $c \geq 2$. Put $G = G' \sqcup D_1 \sqcup D_2$, where $D_1$ and $D_2$ are two cycles, and $G'$ is a (possibly empty) disjoint collection of $c-2$ cycles. Without loss of generality assume the cycles $D_1$ and $D_2$ are labeled $1, 2, \ldots, a$ and $a+1, a+2, \ldots, m$, respectively, in cyclic order; note that $3 \leq a \leq m-3$. By adding the edges $\{a,a+1\}$ and $\{1,m\}$ to $D_1 \sqcup D_2$, we obtain the graph $H$ of Lemma~\ref{lem:parallel_edges}. Denote the cycle $1, 2, \ldots, m-1, 1$ by $C$.

  Observe that $\KG(G' \sqcup H,k) \subseteq \KG(G,k)$ since $G \subseteq G' \sqcup H$. Hence by Lemma~\ref{lem:parallel_edges}, there is a homomorphism from $\KG(G' \sqcup C,k)$ to $\KG(G'\sqcup H,k)$. Moreover, $G' \sqcup C$ is a disjoint union of $c-1$ cycles of total length $n-1 \geq 2k+(c-1)-1$. By the induction hypothesis,
  \begin{align*}
    \chi(\KG(G,k)) &\geq \chi(\KG(G' \sqcup H,k)) \\
                   &\geq \chi(\KG(G'\sqcup C,k)) \\
                   &\geq (n-1)-2k+3-(c-1) \\
                   &= n-2k+3-c,
  \end{align*}
  as required.
\end{proof}

The next result shows that the bound of Theorem~\ref{thm:chi_bound} is attained for every number of cycles.

\begin{proposition}
  Let $2 \leq c \leq k$ be integers. Then there exists a disjoint union of $c$ cycles $G$ on $n=3k$ vertices with $\chi(\KG(G,k)) \leq n-2k+3-c$.
\end{proposition}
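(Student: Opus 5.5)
The plan is to take $c-1$ triangles together with one cycle of length $3m$, where $m=k-c+1$, and to color $\KG(G,k)$ by looking only at how each $k$-set meets the long cycle. Concretely, set $G=(c-1)K_3\sqcup C_{3m}$. Since $2\le c\le k$ we have $m\ge 1$, and $G$ has $3(c-1)+3m=3k=n$ vertices. The target is
\[
  \chi(\KG(G,k))\le n-2k+3-c=k+3-c=m+2 .
\]

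First step: control the cycle part of each vertex of $\KG(G,k)$. An independent $k$-set $U$ of $G$ contains at most one vertex from each triangle, hence at most $c-1$ vertices outside $C_{3m}$. So $U\cap V(C_{3m})$ is an independent set of $C_{3m}$ of size at least $k-c+1=m$. For each such $U$ we fix an $m$-element subset $g(U)\subseteq U\cap V(C_{3m})$, say the $m$ smallest labels. This $g(U)$ is a stable $m$-set of the cycle $C_{3m}$, that is, a vertex of $\SG(3m,m)=\KG(C_{3m},m)$; this graph is defined because $3m\ge 2m$.

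Second step: show that $g$ is a graph homomorphism $\KG(G,k)\to\SG(3m,m)$. If $U$ and $W$ are disjoint, then $g(U)\subseteq U$ and $g(W)\subseteq W$ are disjoint as well, so adjacent vertices go to adjacent vertices. Hence $\chi(\KG(G,k))\le\chi(\SG(3m,m))$.

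Third step: bound $\chi(\SG(3m,m))$. By Kneser's upper bound, as recalled in the introduction via spanning subgraphs, $\chi(\SG(3m,m))\le 3m-2m+2=m+2$. Combining this with the second step gives the required bound. As a sanity check, in the extreme case $c=k$ we have $m=1$, and $\SG(3,1)=K_3$ needs exactly $3=k+3-c$ colors. I do not expect a real obstacle. The only points to verify are the counting bound on triangle vertices and that $g$ is well defined, meaning every vertex meets the long cycle in at least $m$ elements; both follow directly from $c\le k$.
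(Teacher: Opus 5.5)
Your proof is correct, but it uses a different construction from the paper's. The paper takes one cycle $D_1$ of length $k-c+3$ and $c-1$ odd cycles of total length $2k+c-3$. These odd cycles can contain at most $k-1$ vertices of any independent set, so every independent $k$-set meets $D_1$, and the $k+3-c$ stars centered at the vertices of $D_1$ cover $\KG(G,k)$. You instead make the outside part as small as possible ($c-1$ triangles, holding at most $c-1$ vertices of an independent set). This forces every independent $k$-set to meet the long cycle $C_{3m}$ in at least $m=k-c+1$ vertices, and you then use the homomorphism to $\SG(3m,m)$ together with Kneser's coloring instead of stars.

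Both arguments are extreme cases of one principle. If $G=G_1\sqcup G_2$ with $\alpha(G_1)\le k-m$, then sending $U$ to a fixed $m$-subset of $U\cap V(G_2)$ is a homomorphism $\KG(G,k)\to\KG(G_2,m)$, so $\chi(\KG(G,k))\le |V(G_2)|-2m+2$. Taking $G_1$ to be $c-1$ odd cycles of total length $2(k-m)+c-1$ gives exactly $n-2k+3-c$ for every $1\le m\le k-c+1$. The paper uses $m=1$ and you use $m=k-c+1$.

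The choice $m=1$ is the more elementary one: it is a homomorphism to a complete graph, it needs no appeal to Kneser's coloring, and it never leaves the convention $k\ge 2$. Your version needs the Kneser bound and, when $c=k$, the degenerate graph $\SG(3,1)=K_3$, which you handled correctly. In return it exhibits a different extremal family, in which all cycles but one are triangles. The two constructions coincide when $c=k$, where both give $k$ disjoint triangles.
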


\begin{proof}
  Let $G$ be the disjoint union of $c$ cycles $D_1, D_2, \dots, D_{c}$ where $D_1$ has $k-c+3$ vertices and, for $i \geq 2$, the cycle $D_i$ has $j_i$ vertices with $j_i$ odd and $\sum_{i=2}^c j_i = 2k+c-3$. Such a choice is possible: for example, it suffices to take $j_2 = 2k-2c+3$ and $j_i = 3$ for every $i \geq 3$.

  We claim that any independent $k$-set $A$ of $G$ intersects $D_1$. Indeed, each $D_i$ with $i \geq 2$ is an odd cycle, so $|A \cap D_i| \leq (j_i-1)/2$, and therefore
  \[
    |A \setminus V(D_1)| \leq \sum_{i=2}^c \tfrac{j_i-1}2 = \tfrac12(2k+c-3-(c-1)) = k-1.
  \]
  The $|V(D_1)|$ stars centered at the vertices of $D_1$ therefore cover $\KG(G,k)$, so $\chi(\KG(G,k)) \leq |V(D_1)| = k+3-c = n-2k+3-c$, as required.
\end{proof}

\section{Cycles plus triangles}\label{sec:cycles+triangles}

A \emph{cycle-plus-triangles graph} is a $4$-regular multigraph on $3k$ vertices whose edge set can be decomposed into a Hamiltonian cycle and $k$ vertex-disjoint triangles. Du, Hsu and Hwang~\cite{DHH93} conjectured that every cycle-plus-triangles graph has an independent set of size $k$. Erd\H os made the stronger conjecture that such graph are $3$-colorable: its vertices can be partitioned into three independent $k$-sets.

Erd\H os's conjecture was proved by Fleischner and Stiebitz~\cite{FS92} via one of the first applications of the Alon--Tarsi polynomial method~\cite{AT92}. Soon thereafter an elementary---though by no means simple---inductive proof of Erd\H os's conjecture was found by Sachs~\cite{Sac93}. The original conjecture of Du, Hsu and Hwang is an almost immediate corollary of~\cite{AH2000} (see e.g. the survey~\cite{HaxellBCC}, Thm 4.14), in which the single cycle can be replaced by any disjoint union of cycles whose lengths are all divisible by three. The following stronger result for unions of cycles was shown by Aharoni, Holzman, Howard and Sprüssel~\cite{AHHS15}.

\begin{theorem}[Aharoni, Holzman, Howard and Sprüssel~\cite{AHHS15}]\label{thm:1mod3}
  Let $G$ be the disjoint union of cycles of total length $3k$, at most two of which have length $1 \pmod 3$, and $k$ vertex-disjoint triangles on the same vertex set. Then $G$ has an independent set of size $k$.
\end{theorem}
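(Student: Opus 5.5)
The approach I would take is the topological one behind~\cite{AH2000}, not a combinatorial or Kneser-type argument. Write $C$ for the union of cycles and partition $V(G)$ into the vertex sets $V_1,\dots,V_k$ of the $k$ triangles. An independent set of size $k$ in $G$ is exactly a transversal of this partition that is independent in $C$: it cannot use two vertices of one triangle, so it must take one from each. I would invoke the topological Hall theorem (Aharoni--Haxell, in Meshulam's form). With $\eta$ denoting topological connectivity plus $2$, it says such an independent transversal exists provided
\[
  \eta\bigl(\mathcal I(C[V_S])\bigr) \geq |S| \quad\text{for every } S\subseteq[k],
\]
where $V_S=\bigcup_{i\in S}V_i$ and $\mathcal I$ is the independence complex.

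The plan is to compute these connectivities explicitly. Each induced subgraph $C[V_S]$ has $3|S|$ vertices and is a disjoint union of paths, together with whole cycles of $C$ when $V_S$ contains them. The independence complex of a disjoint union is the join of the components' complexes, and $\eta$ is superadditive under joins: $\eta(K*L)\geq\eta(K)+\eta(L)$. So it suffices to use Kozlov's homotopy types. For a path $P_p$, the complex is contractible when $p\equiv 0\pmod 3$, so $\eta=\infty$; otherwise it is a sphere with $\eta(P_p)=\lceil p/3\rceil$. For a cycle $C_n$, it is a sphere or a wedge of two spheres of dimension $\lfloor (n-1)/3\rfloor$ or so. This gives $\eta(C_n)=n/3$ when $n\equiv 0$, $\eta(C_n)=(n+1)/3$ when $n\equiv 2$, and $\eta(C_n)=(n-1)/3$ when $n\equiv 1\pmod 3$.

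I would then check the condition by comparing each component's $\eta$ with one third of its number of vertices. Paths and cycles of length $0$ or $2\pmod 3$ satisfy $\eta\geq(\text{vertices})/3$. Only cycles of length $1\pmod 3$ fall short, each by exactly $1/3$. Summing over the components of $C[V_S]$, I get $\eta\geq |S|-t/3$, where $t\leq 2$ is the number of whole $1\pmod 3$ cycles inside $V_S$. If no path component has length $0\pmod 3$, then every component contributes a multiple of $1/3$ and the total is an integer. Since $t/3<1$, this forces $\eta\geq|S|$. If some path has length $0\pmod 3$, then $\eta=\infty$ and the condition holds trivially.

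I expect the main obstacle to be this bookkeeping step: making sure the deficits genuinely cannot accumulate to a full unit. Pure counting is not enough, because integrality of $\eta$ must be combined with the exact excess each path contributes. For example, $P_1$ contributes $1$ against a share of $1/3$, so surplus is available, while a $1\pmod 3$ cycle contributes a deficit. Once the per-component values are pinned down correctly, the hypothesis ``at most two'' is exactly what is needed, since three such cycles would lose a full $1$. If the rounding does not close as stated, I would fall back on the standard ``remove a vertex'' (link or deletion) induction for independence complexes to handle that case directly.
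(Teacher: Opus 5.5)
Your argument is correct, but it does not follow the paper, because the paper gives no proof of this theorem: it quotes it from \cite{AHHS15}. The paper's only argument in this direction is the proof of the weaker Corollary~\ref{cor:2cycles+triangles}. There, each vertex $A$ of $\KG(H,k)$ gets a color $i$ with $|A\cap T_i|\geq 2$, and Theorem~\ref{thm:chi_bound} forces some vertex to stay uncolored. That Kneser-type argument ignores the residues of the cycle lengths modulo $3$, but it breaks down once there are three cycles, since the bound $3k-2k+3-c$ drops to $k$. In exchange, for a single cycle, Schrijver's theorem even gives two disjoint independent $k$-sets.

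Your route is the topological Hall theorem (Aharoni--Haxell/Meshulam) combined with Kozlov's computations. The paper attributes this route to \cite{AH2000} for cycle lengths divisible by $3$. It handles any number of cycles and isolates exactly the right obstruction. Every component of $C[V_S]$ has $\eta\geq |V|/3$, except a whole cycle of length $1 \pmod 3$, which falls short by exactly $1/3$. Since $\eta$ is an integer, a total deficit of at most $2/3$ is absorbed. This matches the examples of \cite{HW24} with three such cycles. So the bookkeeping closes as written, and your fallback is unnecessary.

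One slip to fix concerns the path values. If $P_p$ has $p$ vertices, then $\mathcal I(P_p)$ is contractible exactly when $p\equiv 1 \pmod 3$. Otherwise $\eta(P_p)=p/3$ for $p\equiv 0$ and $\eta(P_p)=(p+1)/3$ for $p\equiv 2$. Your formulas are right only if $p$ counts edges. Your example ``$P_1$ contributes $1$ against a share of $1/3$'' mixes the two conventions: a single vertex has a contractible complex, and a single edge has $\eta=1$ against a share of $2/3$. Either way every path satisfies $\eta\geq|V|/3$, which is all you use.
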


Haxell and Wdowinski~\cite{HW24} showed that this is best possible: they constructed, for infinitely many $k$, a union of three cycles of length $1 \pmod 3$ and total length $3k$ together with $k$ vertex-disjoint triangles on the same vertex set, having no independent set of size $k$.

A second proof of the Du--Hsu--Hwang conjecture was found by Aharoni et al.~\cite{AABCKLZ17}, using a clever application of Schrijver's theorem. In fact their argument showed the existence of two disjoint independent $k$-sets (instead of the three disjoint independent $k$-sets given by~\cite{FS92,Sac93}).

As an application of Theorem~\ref{thm:chi_bound}, we give a short proof of the following weakening of Theorem~\ref{thm:1mod3}. The argument is exactly that of Aharoni et al.~\cite{AABCKLZ17}, with Theorem~\ref{thm:chi_bound} in place of Schrijver's theorem.

\begin{corollary}\label{cor:2cycles+triangles}
  Let $G$ be the disjoint union of at most two cycles of total length $3k$ and $k$ vertex-disjoint triangles on the same vertex set. Then $G$ has an independent set of size $k$.
\end{corollary}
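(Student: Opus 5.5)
Write $D$ for the union of at most two cycles (the ``cycle part'', total length $3k$), and $T_1,\dots,T_k$ for the triangles. I would follow the argument of Aharoni et al.~\cite{AABCKLZ17}, with Theorem~\ref{thm:chi_bound} in place of Schrijver's theorem, and color $\KG(D,k)$ using the triangles. Since each $T_i$ has three vertices, I label them $v_i^1,v_i^2,v_i^3$. The goal is to show that if $G$ has no independent $k$-set, then $\KG(D,k)$ has a proper coloring with too few colors.

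Suppose $G$ has no independent $k$-set. Then every independent $k$-set $A$ of $D$ contains both endpoints of some triangle edge, i.e. $|A\cap T_i|\ge 2$ for some $i$. Color $A$ by the smallest such $i$. The color class of $i$ is intersecting, since two $2$-subsets of the $3$-set $T_i$ must meet. This gives a proper coloring of $\KG(D,k)$ with $k$ colors. On the other hand, Theorem~\ref{thm:chi_bound} with $n=3k$ and $c\le 2$ cycles gives
\[
\chi(\KG(D,k))\ge 3k-2k+3-c\ge k+1.
\]
The hypothesis $n\ge 2k+c-1$ holds because $3k\ge 2k+1$ for $k\ge 1$; the case $k=1$ is trivial. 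If $D$ is a single cycle, this is Schrijver's theorem. These two bounds contradict each other, so $G$ has an independent $k$-set.

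I need to check one point: a $k$-set that is independent in $D$ and contains no two vertices of any triangle is independent in $G$. This holds because the edges of $G$ are exactly the edges of $D$ together with the triangle edges. The degenerate cases also need care, namely the multigraph situation where a triangle edge coincides with a cycle edge, and cycles of length $1$ or $2$. In these cases Theorem~\ref{thm:chi_bound} is stated only for genuine cycles. If a component has length at most $2$, I would handle it by noting that a $2$-cycle is an edge and merging components, or simply restrict to simple cycles as the paper implicitly does.

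The main obstacle is really just verifying that Theorem~\ref{thm:chi_bound} applies, that is, that $c\le 2$ yields the bound $k+1$ rather than $k$. This is exactly why the argument stops at two cycles: with three cycles the bound would be $k$, and indeed the Haxell--Wdowinski construction shows the conclusion fails in general.
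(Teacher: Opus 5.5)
Your proof is correct and is essentially the paper's argument: the paper colors each independent $k$-set $A$ of the cycle part by an index $i$ with $|A\cap T_i|\ge 2$, and uses Theorem~\ref{thm:chi_bound} to find an uncolored vertex, which must then meet every $T_i$ exactly once. You use the same coloring but phrase it as a contradiction, and your side remarks on degenerate cycles are unnecessary (the paper implicitly assumes genuine cycles) but harmless.
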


\begin{proof}
  Let $H$ be the spanning subgraph of $G$ consisting of $c \leq 2$ cycles, and let $T_1, T_2, \ldots, T_k$ be the vertex sets of the triangles of $G$. For each vertex $A$ of $\KG(H,k)$, color $A$ with color $i$ if $|A \cap T_i| \geq 2$; otherwise leave $A$ uncolored. This defines a $k$-coloring of an induced subgraph of $\KG(H,k)$. By Theorem~\ref{thm:chi_bound}, $\chi(\KG(H,k)) \geq 3k-2k+3-c \geq k+1$, so at least one vertex $A^*$ of $\KG(H,k)$ is uncolored. This means $|A^* \cap T_i| \leq 1$ for every $i \in [k]$, and therefore $|A^* \cap T_i| = 1$ for every $i \in [k]$. Hence, $A^*$ is an independent $k$-set in $G$.
\end{proof}

Note that if $H$ consists of more than one cycle, it cannot be $k$-tight: one of the cycles has length at most $3k/2 \leq 4k-4$ (since $k \geq 2$). So the lower bound of $k+1$ in the above proof is optimal for $H$ consisting of two cycles.

If $H$ consists of a single cycle (as in the original argument of Aharoni et al.~\cite{AABCKLZ17}), Schrijver's theorem gives the stronger bound $\chi(\KG(H,k)) \geq k+2$, and the uncolored subgraph of $\KG(H,k)$ contains at least one edge. This leads to the stronger conclusion mentioned above, namely, that there are \emph{two} disjoint independent $k$-sets.

If $H$ consists of three or more cycles, the bound coming from Theorem~\ref{thm:chi_bound} is too weak to guarantee the existence of any uncolored vertices (and therefore, of any independent $k$-set). This is consistent with Haxell and Wdowinski's construction~\cite{HW24}.

\smallskip
\noindent{\textbf{Declaration.}} No AI tools were used in any way in this paper.

\bibliographystyle{plain}
\bibliography{g-stable-kneser}

@preamble{
   "\def\soft#1{\leavevmode\setbox0=\hbox{h}\dimen7=\ht0\advance
    \dimen7 by-1ex\relax\if t#1\relax\rlap{\raise.6\dimen7
    \hbox{\kern.3ex\char'47}}#1\relax\else\if T#1\relax
    \rlap{\raise.5\dimen7\hbox{\kern1.3ex\char'47}}#1\relax
    \else\if d#1\relax\rlap{\raise.5\dimen7\hbox{\kern.9ex
    \char'47}}#1\relax\else\if D#1\relax\rlap{\raise.5\dimen7
    \hbox{\kern1.4ex\char'47}}#1\relax\else\if l#1\relax
    \rlap{\raise.5\dimen7\hbox{\kern.4ex\char'47}}#1\relax
    \else\if L#1\relax\rlap{\raise.5\dimen7\hbox{\kern.7ex
    \char'47}}#1\relax\else\message{accent \string\soft
    \space #1 not defined!}#1\relax\fi\fi\fi\fi\fi\fi} "
}

@incollection {AABCKLZ17,
    AUTHOR = {Aharoni, R. and Alon, N. and Berger, E. and Chudnovsky,
              M. and Kotlar, D. and Loebl, M. and Ziv, R.},
     TITLE = {Fair representation by independent sets},
 BOOKTITLE = {A journey through discrete mathematics},
     PAGES = {31--58},
 PUBLISHER = {Springer, Cham},
      YEAR = {2017},
      ISBN = {978-3-319-44478-9; 978-3-319-44479-6},
}

@article {AH2000,
    AUTHOR = {Aharoni, R. and Haxell, P.},
     TITLE = {Hall's Theorem for hypergraphs},
   JOURNAL = {J. Graph Th.},
  FJOURNAL = {Journal of Graph Theory},
    VOLUME = {35},
      YEAR = {2000},
    NUMBER = {2},
     PAGES = {83--88},
      ISSN = {},
       DOI = {},
       URL = {},
}

@article {AHHS15,
    AUTHOR = {Aharoni, R. and Holzman, R. and Howard, D. and
              Spr{\"u}ssel, P.},
     TITLE = {Cooperative colorings and independent systems of
              representatives},
   JOURNAL = {Electron. J. Combin.},
  FJOURNAL = {Electronic Journal of Combinatorics},
    VOLUME = {22},
      YEAR = {2015},
    NUMBER = {2},
     PAGES = {Paper 2.27, 14},
      ISSN = {1077-8926},
       DOI = {10.37236/2488},
       URL = {https://doi.org/10.37236/2488},
}

@article {AT92,
    AUTHOR = {Alon, N. and Tarsi, M.},
     TITLE = {Colorings and orientations of graphs},
   JOURNAL = {Combinatorica},
  FJOURNAL = {Combinatorica. An International Journal on Combinatorics and
              the Theory of Computing},
    VOLUME = {12},
      YEAR = {1992},
    NUMBER = {2},
     PAGES = {125--134},
      ISSN = {0209-9683},
       DOI = {10.1007/BF01204715},
       URL = {https://doi.org/10.1007/BF01204715},
}

@article {Che15,
    AUTHOR = {Chen, P.},
     TITLE = {On the multichromatic number of {$s$}-stable {K}neser graphs},
   JOURNAL = {J. Graph Theory},
  FJOURNAL = {Journal of Graph Theory},
    VOLUME = {79},
      YEAR = {2015},
    NUMBER = {3},
     PAGES = {233--248},
      ISSN = {0364-9024,1097-0118},
       DOI = {10.1002/jgt.21826},
       URL = {https://doi.org/10.1002/jgt.21826},
}

@article {DMM21,
    AUTHOR = {Daneshpajouh, H. and Meunier, F. and Mizrahi, G.},
     TITLE = {Colorings of complements of line graphs},
   JOURNAL = {J. Graph Theory},
    VOLUME = {98},
      YEAR = {2021},
    NUMBER = {2},
     PAGES = {216--233},
      ISSN = {0364-9024,1097-0118},
       DOI = {10.1002/jgt.22689},
       URL = {https://doi.org/10.1002/jgt.22689},
}

@article {DHH93,
    AUTHOR = {Du, D. Z. and Hsu, D. F. and Hwang, F. K.},
     TITLE = {The {H}amiltonian property of consecutive-{$d$} digraphs},
      NOTE = {Graph-theoretic models in computer science, II (Las Cruces,
              NM, 1988--1990)},
   JOURNAL = {Math. Comput. Modelling},
  FJOURNAL = {Mathematical and Computer Modelling},
    VOLUME = {17},
      YEAR = {1993},
    NUMBER = {11},
     PAGES = {61--63},
      ISSN = {0895-7177},
       DOI = {10.1016/0895-7177(93)90253-U},
       URL = {https://doi.org/10.1016/0895-7177(93)90253-U},
}

@article {FS92,
    AUTHOR = {Fleischner, H. and Stiebitz, M.},
     TITLE = {A solution to a colouring problem of {P}. {E}rd{\H{o}}s},
      NOTE = {Special volume to mark the centennial of Julius Petersen's
              ``Die Theorie der regul\"aren Graphs'', Part II},
   JOURNAL = {Discrete Math.},
  FJOURNAL = {Discrete Mathematics},
    VOLUME = {101},
      YEAR = {1992},
    NUMBER = {1-3},
     PAGES = {39--48},
      ISSN = {0012-365X,1872-681X},
       DOI = {10.1016/0012-365X(92)90588-7},
       URL = {https://doi.org/10.1016/0012-365X(92)90588-7},
}

@article {Hav24,
    AUTHOR = {Haviv, I.},
     TITLE = {Fixed-parameter algorithms for the {K}neser and {S}chrijver
              problems},
   JOURNAL = {SIAM J. Comput.},
  FJOURNAL = {SIAM Journal on Computing},
    VOLUME = {53},
      YEAR = {2024},
    NUMBER = {2},
     PAGES = {287--314},
      ISSN = {0097-5397,1095-7111},
       DOI = {10.1137/23M1557076},
       URL = {https://doi.org/10.1137/23M1557076},
}

@incollection {HaxellBCC,
    AUTHOR = {Haxell, P.},
     TITLE = {Topological connectedness and independent sets in graphs},
 BOOKTITLE = {Surveys in Combinatorics 2019, LMS Lecture Notes Series 456},
     PAGES = {89--113},
 PUBLISHER = {Cambridge Univ. Press, Cambridge},
      YEAR = {2019},
      ISBN = {},
}

@article {HW24,
    AUTHOR = {Haxell, P. and Wdowinski, R.},
     TITLE = {Constructing graphs with no independent transversals},
   JOURNAL = {Electron. J. Combin.},
    VOLUME = {31},
      YEAR = {2024},
    NUMBER = {2},
     PAGES = {Paper No. 2.39, 28},
      ISSN = {1077-8926},
       DOI = {10.37236/12429},
       URL = {https://doi.org/10.37236/12429},
}

@unpublished {Jon12,
    AUTHOR = {Jonsson, J.},
     TITLE = {On the chromatic number of generalized stable Kneser graphs},
      NOTE = {Unpublished preprint, available at \url{https://people.kth.se/~jakobj/research.html}},
      YEAR = {2012}
}

@article {Kap43,
    AUTHOR = {Kaplansky, I.},
     TITLE = {Solution of the ``probl\`eme des m\'enages.''},
   JOURNAL = {Bull. Amer. Math. Soc.},
  FJOURNAL = {Bulletin of the American Mathematical Society},
    VOLUME = {49},
      YEAR = {1943},
     PAGES = {784--785},
      ISSN = {0002-9904},
       DOI = {10.1090/S0002-9904-1943-08035-4},
       URL = {https://doi.org/10.1090/S0002-9904-1943-08035-4},
}

@article {Kne55,
    AUTHOR = {Kneser, M.},
     TITLE = {Aufgabe 360},
   JOURNAL = {Jahresber. Deutsch. Math.-Verein.},
    VOLUME = {58},
      YEAR = {1955},
    NUMBER = {2. Abteilung},
     PAGES = {27},
}

@article {Lov78,
    AUTHOR = {Lov{\'a}sz, L.},
     TITLE = {Kneser's conjecture, chromatic number, and homotopy},
   JOURNAL = {J. Combin. Theory Ser. A},
    VOLUME = {25},
      YEAR = {1978},
    NUMBER = {3},
     PAGES = {319--324}
}

@book {Mat03,
    AUTHOR = {Matou{\v{s}}ek, J.},
     TITLE = {Using the {B}orsuk--{U}lam theorem},
    SERIES = {Universitext},
 PUBLISHER = {Springer-Verlag},
   ADDRESS = {Berlin},
      YEAR = {2003}
}

@article {Meu11,
    AUTHOR = {Meunier, F.},
     TITLE = {The chromatic number of almost stable {K}neser hypergraphs},
   JOURNAL = {J. Combin. Theory Ser. A},
  FJOURNAL = {Journal of Combinatorial Theory. Series A},
    VOLUME = {118},
      YEAR = {2011},
    NUMBER = {6},
     PAGES = {1820--1828},
       DOI = {10.1016/j.jcta.2011.02.010},
       URL = {https://doi.org/10.1016/j.jcta.2011.02.010},
}

@incollection {Sac93,
    AUTHOR = {Sachs, H.},
     TITLE = {Elementary proof of the cycle-plus-triangles theorem},
 BOOKTITLE = {Combinatorics, {P}aul {E}rd\H os is eighty, {V}ol.\ 1},
    SERIES = {Bolyai Soc. Math. Stud.},
     PAGES = {347--359},
 PUBLISHER = {J\'anos Bolyai Math. Soc., Budapest},
      YEAR = {1993},
      ISBN = {963-8022-74-4},
}

@article {Sch78,
    AUTHOR = {Schrijver, A.},
     TITLE = {Vertex-critical subgraphs of {K}neser graphs},
   JOURNAL = {Nieuw Arch. Wisk. (3)},
    VOLUME = {26},
      YEAR = {1978},
    NUMBER = {3},
     PAGES = {454--461}
}
\end{document}